\documentclass{amsart}

\usepackage[all]{xy}
\usepackage{hyperref}
\hypersetup{
    colorlinks,%
    citecolor=blue,%
    filecolor=blue,%
    linkcolor=blue,%
    urlcolor=blue
}
\usepackage{booktabs}
\usepackage{mathrsfs}
\usepackage{todonotes}
\usepackage[backrefs,lite]{amsrefs}
\usepackage{longtable}

\usepackage{enumitem}
\usepackage{graphicx}

\usepackage{tikz}
\usepackage{wrapfig}

\newtheorem{introthm}{Theorem}

\newtheorem{theorem}{Theorem}[section]
\newtheorem{lemma}[theorem]{Lemma}
\newtheorem{proposition}[theorem]{Proposition}

\theoremstyle{definition}

\newtheorem{remark}[theorem]{Remark}
\theoremstyle{remark}

\numberwithin{equation}{section}

\def\cc{{\mathbb C}}

\def\zz{{\mathbb Z}}

\def\qq{{\mathbb Q}}
\def\pp{{\mathbb P}}

\def\Osh{{\mathcal O}}
\def\R{{\mathcal R}}

\def\Eff{\operatorname{Eff}}
\def\Nef{\operatorname{Nef}}

\def\Mov{\operatorname{Mov}}
\def\Cl{\operatorname{Cl}}

\def\Pic{\operatorname{Pic}}

\def\rk{\operatorname{rk}}

\def\Spec{\operatorname{Spec}}

\def\MW{\operatorname{MW}}

\begin{document}

\title{On del Pezzo elliptic varieties
of degree $\leq 4$}

\author[A.~Laface]{Antonio Laface}
\address{
Departamento de Matem\'atica,
Universidad de Concepci\'on,
Casilla 160-C,
Concepci\'on, Chile}
\email{alaface@udec.cl}

\author[A.~Tironi]{Andrea L. Tironi}
\address{
Departamento de Matem\'atica,
Universidad de Concepci\'on,
Casilla 160-C,
Concepci\'on, Chile}
\email{atironi@udec.cl}

\author[L.~Ugaglia]{Luca Ugaglia}
\address{
Dipartimento di Matematica e Informatica,
Universit\`a degli studi di Palermo,
Via Archirafi 34,
90123 Palermo, Italy}
\email{luca.ugaglia@unipa.it}

\subjclass[2010]{Primary 14C20, 
14Q15; 
Secondary 14E05, 
14N25. 
}

\thanks{
The first author was partially supported 
by Proyecto FONDECYT Regular N. 1150732.
The second author was partially supported 
by Proyecto VRID N. 214.013.039-1.OIN. The third author was partially supported 
by Universit\`a di Palermo (2012-ATE-0446).
}

\begin{abstract}
Let $Y$ be a del Pezzo variety of degree $d\leq 4$ 
and dimension $n\geq 3$, let $H$ be an ample
class such that $-K_Y=(n-1)H$ and let $Z\subset Y$
be a $0$-dimensional subscheme of length $d$
such that the subsystem of elements of $|H|$ with
base locus $Z$ gives a rational morphism 
$\pi_Z\colon Y\dashrightarrow\pp^{n-1}$. Denote by 
$\pi\colon X\to \pp^{n-1}$ the elliptic fibration obtained 
by resolving the indeterminacy locus of $\pi_Z$.
Extending the results of~\cite{hltu} we study the geometry
of the variety $X$ and we prove that the Mordell-Weil 
group of $\pi$ is finite if and only if the Cox ring of $X$ is 
finitely generated.
\end{abstract}
\maketitle

\section*{Introduction}

Let $Y$ be a del Pezzo variety of dimension $n\geq 3$ 
and $H$ an ample class such that 
$-K_Y=(n-1)H$ and let $d:=H^n$ be the degree of $Y$.
We consider the rational map 
$\pi_Z\colon Y\dashrightarrow\pp^{n-1}$ 
associated to a linear series $V\subset |H|$ of dimension 
$n-1$, having $0$-dimensional base locus $Z$. 
In what follows we say that the map 
$\pi\colon X\to\pp^{n-1}$, obtained by 
resolving the indeterminacy of $\pi_Z$,
is a {\em del Pezzo elliptic fibration} while 
$X$ is a {\em del Pezzo elliptic variety}
of degree $d$.

In~\cite{cps}
the case of general $V$ is considered in relation with
the Morrison-Kawamata cone conjecture. In~\cite{hltu}
the case $\deg(Y)=3$ has been studied, providing the
Mordell-Weil groups of all the types of fibrations that
can be obtained and proving that the group is finite
if and only if the Cox ring of $X$ is finitely generated.

In this paper we extend the results of~\cite{hltu}
to del Pezzo elliptic varieties of degree $\leq 4$.
Our first result is about the Mordell-Weil groups 
of the corresponding del Pezzo elliptic fibrations
(the notation will be explained in Section~\ref{sec:types}).
\begin{introthm}\label{thm:mw}
The Mordell-Weil groups of the
del Pezzo elliptic fibrations of 
degree $d\leq 4$ and dimension $n\geq 3$ 
are the following:
{\small 
\begin{longtable}{|c|l|l|c|l|l|}
\hline
 Degree & Type & $\MW(\pi)$
 &
 Degree & Type & $\MW(\pi)$
 \\[2pt]
\hline
$1$ & $X_{1}$ & $\langle 0\rangle$ & $3$
& $X_{3}, X_S$ & $\langle 0\rangle$\\[2pt]
\hline
$2$ & $X_{11}$ & $\zz$ 
& $4$ & $X_{40}$ & $\zz^3$\\[2pt]
& $X_{SS}$ & $\zz/2\zz$ &
& $X_{41}, X_{30}$ & $\zz^2$ \\[2pt]
& $X_{2}$ & $\langle 0\rangle$ &
& $X_{42}$ & $\zz\oplus(\zz/2\zz)$ \\[2pt]
\cline{1-3}
$3$ & $X_{111}$ & $\zz^2$ &
& $X_{31}, X_{20}, X_{21}$ & $\zz$ \\[2pt]
& $X_{S11}, X_{12}$ & $\zz$ &
& $X_{43}$ & $(\zz/2\zz)^2$\\[2pt]
& $X_{SSS}$ & $\zz/3\zz$ &
& $X_{21}, X_{22}$ & $\zz/2\zz$\\[2pt]
& $X_{S2}$ & $\zz/2\zz$ &
& $X_{10}, X_{11}$ & $\langle 0\rangle$\\[2pt]
\hline
\caption{Mordell-Weil groups of del Pezzo elliptic fibrations}
\label{types}
\end{longtable}
}
\end{introthm}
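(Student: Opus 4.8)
The plan is to run a case analysis over the finitely many types of Table~\ref{types}, which are enumerated in Section~\ref{sec:types} according to the scheme structure of $Z$ and its position on $Y$ (in particular relative to $\operatorname{Sing}(Y)$ and, for $d=2$, to the branch divisor); for the generic type in each degree and for all of degree $3$ one can quote \cite{cps} and \cite{hltu}, so the new content is concentrated in the remaining, non-generic types. For each type I would first exhibit $X$ explicitly as an iterated blow-up $\mu\colon X\to Y$ resolving the indeterminacy of $\pi_Z$; since $\operatorname{length}(Z)=d\le4$ only boundedly many blow-ups of (possibly infinitely near) points of $\operatorname{Supp}(Z)$ are needed, and from the combinatorics of the resolution one reads off a $\zz$-basis of $\Cl(X)$. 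A Lefschetz-type theorem gives $\Cl(Y)=\zz\,H$ for $n\ge3$, hence $\Cl(X)=\zz\,\mu^*H\oplus\bigoplus_{i=1}^r\zz E_i$ with $E_1,\dots,E_r$ the exceptional primes, so $\rho(X)=1+r$ \emph{independently of $n$}; I would also record which of the $E_i$, and which strict transforms of the special hyperplane sections of $Y$ through $\operatorname{Supp}(Z)$, are sections of $\pi$, at least one always being so, so that $\MW(\pi)$ is a genuine group.

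Next I would determine the reducible fibres of $\pi$ over the codimension-one points $v$ of $\pp^{n-1}$ and their numbers of components $m_v$: the fibre over the generic point of a prime divisor of $\pp^{n-1}$ is the strict transform of an intersection of $n-1$ members of $V$ in special relative position, and it degenerates precisely over the components of the discriminant of $\pi$, which are dictated by the geometry of $Z$. A convenient control is obtained by restricting over a general line $L\cong\pp^1\subset\pp^{n-1}$: the preimage of $L$ in $Y$ is a del Pezzo surface $T$ of degree $d$ with $-K_T=H|_T$, so the strict transform $S\subset X$ of $T$ is $\Bl_Z T$ (or its minimal resolution if $T$ is singular), a rational elliptic surface whose singular fibres are exactly those of $\pi$ met by $L$, hence whose configuration must appear in the Oguiso--Shioda classification.

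Then I would compute $\MW(\pi)$. The rank follows from the higher-dimensional analogue of the Shioda--Tate formula,
\[
\rk\MW(\pi)\;=\;\rho(X)-2-\sum_v(m_v-1)\;=\;r-1-\sum_v(m_v-1),
\]
valid since the base is $\pp^{n-1}$ and $\pi$ has a section, and the free generators of the predicted rank are produced explicitly as differences of the sections found above. For the torsion, the restriction map $\MW(\pi)\hookrightarrow\MW(S)$ is injective (a section is determined by its values on a general line) and even surjective on torsion, because the $m$-torsion of the generic fibre is a finite \'etale cover of a dense open of $\pp^{n-1}$ whose monodromy, by the Lefschetz theorem for fundamental groups, agrees with its monodromy over $L$; hence $\MW(\pi)_{\mathrm{tors}}=\MW(S)_{\mathrm{tors}}$, which is read off from the fibre configuration of the previous step. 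Collating the results over all types yields Table~\ref{types}.

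The hard part will be the second step for the degenerate types --- those in which $Z$ is non-reduced or meets $\operatorname{Sing}(Y)$ --- since then resolving $\pi_Z$ takes several successive blow-ups and it is delicate to track how the exceptional loci meet the discriminant, hence to pin down the Kodaira types of the reducible fibres; yet these control both $\sum_v(m_v-1)$ and the admissible torsion. A secondary technical point is justifying the higher-dimensional Shioda--Tate formula, and the identification $\rho(X)=\rk\Cl(X)$, for the possibly singular del Pezzo varieties of degree $\le4$ in all dimensions $n\ge3$ --- which is where the hypotheses $d\le4$ and $n\ge3$ genuinely enter.
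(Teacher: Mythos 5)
Your backbone coincides with the paper's: both arguments rest on (i) the explicit, type-by-type determination of the sections and of the fibral (vertical) divisors of $\pi$, and (ii) a Shioda--Tate-type statement for elliptic fibrations over $\pp^{n-1}$. The paper takes (ii) in the form of Wazir's exact sequence $0\to\mathscr T\to\Pic(X)\to X_\eta(K)\to 0$, where $\mathscr T$ is generated by the vertical classes and one section (\cite{Wa}), and then computes $\MW(\pi)=\Pic(X)/\mathscr T$ as an abstract abelian group directly from the lists in Proposition~\ref{pro:vertical}; this yields rank and torsion in one stroke. Your rank formula $\rk\MW=\rho(X)-2-\sum_v(m_v-1)$ is exactly the rank part of that sequence, so there is no real difference there. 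Where you genuinely diverge is the torsion: you restrict to the rational elliptic surface $S=\pi^{-1}(L)$ over a general line, prove $\MW(\pi)_{\rm tors}\cong\MW(S)_{\rm tors}$ by a $\pi_1$-Lefschetz/monodromy argument on the relative $m$-torsion cover, and then appeal to Oguiso--Shioda. That identification is correct (and note $\MW(S)$ is strictly larger than $\MW(\pi)$ in general: e.g.\ for $X_{43}$ the surface has Mordell--Weil rank $2$ while $\MW(\pi)$ is finite, so only the torsion transfers), but it buys you less than you claim: the Oguiso--Shioda table is indexed by the trivial lattice together with its embedding into $E_8$, and the fibre configuration alone does not determine the torsion. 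Already in the case $X_{SS}$ the surface has four $A_1$-fibres, and $A_1^{\oplus 4}$ occurs both with trivial torsion and with $\zz/2\zz$; to decide between them you must exhibit (or exclude) the torsion section, which is exactly the explicit bookkeeping of sections and vertical classes that the paper's quotient computation performs automatically. So your torsion route is workable but circles back to the same data, with an extra step that the paper avoids.

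The more serious issue is that the actual content of the theorem --- the determination, for each degenerate type, of the reducible-fibre components $m_v$ and of which exceptional divisors are sections (the paper's Section~\ref{sec:types} and Proposition~\ref{pro:vertical}, e.g.\ the analysis of conics through two of the base points via the vertices $p_i$ of the singular quadrics in the pencil, Proposition~\ref{inf}) --- is precisely what you defer as ``the hard part'' and never carry out; as written, the proposal is a viable plan rather than a proof of the table. Two smaller corrections: the del Pezzo varieties $Y$ here are smooth, so the degenerate types arise from non-reduced $Z$ and from special position of the plane $\Lambda$ (containing vertices $p_i$ when $d=4$, or the bitangent/branch geometry when $d=2$), not from $Z$ meeting $\operatorname{Sing}(Y)$; and the ``higher-dimensional Shioda--Tate formula'' you worry about justifying is simply a citation to \cite{Wa}, so the hypotheses $d\le4$, $n\ge3$ enter through the geometry of the types, not through that formula.
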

Our second result is about Cox rings of elliptic del Pezzo
varieties.
\begin{introthm}
\label{cox-mw}
Let $X$ be a del Pezzo elliptic
variety of degree $\leq 4$
and dimension $n\geq 3$. Then the following are equivalent:
\begin{enumerate}
\item the Cox ring of $X$ is finitely generated;
\item the Mordell-Weil group of $\pi\colon X\to \pp^{n-1}$
is finite.
\end{enumerate}
\end{introthm}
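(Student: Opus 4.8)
The plan is to prove the two implications separately: $(1)\Rightarrow(2)$ is the soft, uniform direction, proved once and for all, whereas $(2)\Rightarrow(1)$ will be proved type by type, using the classification that underlies Theorem~\ref{thm:mw}.

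\emph{The implication $(1)\Rightarrow(2)$.} I would argue by contraposition. First recall that $-K_X=(n-1)\pi^*\Osh_{\pp^{n-1}}(1)$, so $X$ is a $\qq$-factorial klt Calabi--Yau fibre space over $\pp^{n-1}$; translation by a section of $\pi$ is an isomorphism of $X$ in codimension one preserving the fibration (it acts fibrewise by translation), hence a pseudo-automorphism, and this yields an injection $\MW(\pi)\hookrightarrow\operatorname{PsAut}(X)$. Now assume $\R(X)$ is finitely generated, so that $X$ is a Mori dream space. Then $\Mov(X)$ is rational polyhedral and tiled by the nef cones of the finitely many small $\qq$-factorial modifications of $X$; the group $\operatorname{PsAut}(X)$ permutes these finitely many chambers, the stabiliser of $\Nef(X)$ consists of genuine automorphisms, and $\Aut(X)$ has finite image in $\mathrm{GL}(N^1(X))$ because $\Eff(X)$ is a full-dimensional rational polyhedral $\Aut(X)$-invariant cone (a finite-index subgroup fixes each of its extremal rays, these span $N^1(X)$, and an integral transformation fixing every primitive ray-generator of a spanning family is the identity). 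Hence $\operatorname{PsAut}(X)$, and a fortiori $\MW(\pi)$, has finite image in $\mathrm{GL}(N^1(X))$. To conclude that $\MW(\pi)$ itself is finite I would show that a section of infinite order acts on $N^1(X)$ with infinite order: restricting $X$ over a general line $\ell\subset\pp^{n-1}$ gives a rational elliptic surface $X_\ell=\pi^{-1}(\ell)$ (one checks $-K_{X_\ell}$ is a fibre class), the given section restricts to one of infinite order, translation by it has infinite order on $N^1(X_\ell)$ by the Shioda--Tate isomorphism and the positive definiteness of the height pairing, and the restriction $N^1(X)\to N^1(X_\ell)$ intertwines the two translation actions.

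\emph{The implication $(2)\Rightarrow(1)$.} Here I would go through the finitely many types in Table~\ref{types} with $\MW(\pi)$ finite. For each such type one exploits the explicit resolution $f\colon X\to Y$: since $Y$ is a del Pezzo variety of degree $\le 4$ its Cox ring is known, and $f$ resolves the indeterminacy of $\pi_Z$ along the length-$d$ scheme $Z$, so the prime divisors of $X$ whose classes generate the extremal rays of $\Eff(X)$ are finite in number -- the components of the exceptional locus of $f$ together with the strict transforms of the few divisors of $Y$ through $Z$ whose classes land on the boundary of $\Eff(X)$ -- and $\pi$ has only finitely many sections. One then verifies, case by case, that $\Eff(X)$ is rational polyhedral, that $\Nef(X)$ is generated by semiample classes, and that $X$ has only finitely many small $\qq$-factorial modifications, so that $X$ is a Mori dream space by the criterion of Hu--Keel; alternatively one appeals to the Morrison--Kawamata framework developed in \cite{cps}. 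The reason everything works is that, $\MW(\pi)$ being finite, the blow-up $f$ produces no infinite chain of negative divisors -- precisely the content of the Mordell--Weil computation behind Theorem~\ref{thm:mw}.

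I expect the implication $(2)\Rightarrow(1)$ to be the main obstacle: it is not uniform, and for each finite-Mordell--Weil type one must analyse the geometry of the corresponding del Pezzo variety $Y$ and of the scheme $Z$, identify the negative divisors of $X$, check the rational polyhedrality of $\Eff(X)$, and exhibit generators of $\R(X)$. By contrast $(1)\Rightarrow(2)$ is essentially formal once $\operatorname{PsAut}(X)$ and its action on $N^1(X)$ are understood; the only slightly delicate point there is the reduction to the rational elliptic surface $X_\ell$, which requires knowing that the general such surface is relatively minimal with $-K$ a fibre class.
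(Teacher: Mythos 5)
Your overall architecture coincides with the paper's: the implication $(1)\Rightarrow(2)$ is treated as the soft, uniform direction (the paper simply quotes \cite{hltu}*{Lemma~3.5}; your contraposition argument via the finiteness of the chamber structure of a Mori dream space, the finiteness of the image of $\Aut(X)$ in ${\rm GL}(N^1(X))$, and the restriction to the rational elliptic surface $\pi^{-1}(\ell)$ with the height pairing is a sound way of reproving that lemma -- note that translations are isomorphisms in codimension one simply because $K_X=-(n-1)F$ is $\pi$-numerically trivial, so $X$ is a minimal model over $\pp^{n-1}$), and for $(2)\Rightarrow(1)$ you invoke the same criterion of \cite{HuKe} that the paper uses.

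The gap is in $(2)\Rightarrow(1)$: as written it is a program, not a proof. ``One then verifies, case by case, that $\Eff(X)$ is rational polyhedral, that $\Nef(X)$ is generated by semiample classes, and that $X$ has only finitely many small $\qq$-factorial modifications'' is exactly the mathematical content that has to be supplied, and it is not routine. For $d=4$ the Hu--Keel hypotheses are checked through the full Mori chamber decomposition of $\Mov(X)$ (Proposition~\ref{mc}), and there is a concrete phenomenon your plan does not foresee: for the types $X_{21}$ and $X_{10}$ the nef cone together with its elementary flop images $N_i$ (flops of the classes $h-e_i$ of lines through the blown-up points) does \emph{not} exhaust $\Mov(X)$; the remaining chambers are obtained only via the generalized Geiser involutions $\sigma_{21}$, $\sigma_{10}$ of Remark~\ref{geiser}, whose existence and action on $\Pic(X)$ must be established before one can see that every chamber is the nef cone of an SQM spanned by finitely many semiample classes. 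Knowing that $\Eff(X)$ is polyhedral and that its dual is $\Mov(X)$ is itself Theorem~\ref{eff-mov-cones}, which requires the classification of sections and vertical divisors and ad hoc movability checks (e.g.\ for $H-2E_1$ and $3H-4E_1-4E_2$). Moreover, for $d\le 2$ the paper does not argue via chambers at all: finite generation there is proved by explicitly computing the Cox rings with the blow-up algorithm of \cite{hkl} (Theorem~\ref{cox:12}), and this explicit input also feeds into the low-degree part of Theorem~\ref{eff-mov-cones}. Finally, the suggested fallback to the framework of \cite{cps} does not apply directly, since that paper treats the general linear system, while the finite Mordell--Weil types here are precisely the special configurations. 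So the strategy is the right one, but the case-by-case verification -- the heart of this direction -- is missing from the proposal.
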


We prove Theorem~\ref{cox-mw} showing that any
del Pezzo elliptic variety, whose corresponding elliptic
fibration has finite Mordell-Weil group, is a Mori Dream 
Space and viceversa. Then we conclude by means of 
~\cite{HuKe}*{Proposition 2.9}. The proof of our second 
theorem makes use of a detailed study of the structure of the
moving and effective cones of elliptic del Pezzo varieties.
In particular we prove the following.
\begin{introthm}\label{eff-mov-cones}
Let $\pi\colon X\to\pp^{n-1}$ be a del Pezzo
elliptic fibration of degree $d\leq 4$ and having
finite Mordell-Weil group.
Then the effective cone $\Eff(X)$ is generated by 
the vertical classes and the classes of sections,
the cone $\Mov(X)$ is the dual of $\Eff(X)$ with respect 
to the bilinear form introduced in~\eqref{bil}.
The intersection graphs for the effective cones
are given in the following table,
where each vertex corresponds to a section or a
vertical class $D$, the label in the vertex is $-\langle D,D\rangle$
and the number of edges connecting two vertices
$D$ and $D'$ is 
$\langle D,D'\rangle$. 
\begin{center}
\begin{table}[h]
\begin{tikzpicture}[main node/.style={circle,fill=blue!20,draw,minimum size=3pt,inner sep=1pt}, scale=0.5]

 \begin{scope}[xshift=-8cm,yshift=9.5cm]    
 \foreach \s in {0,-3,19}
 {\draw[-] (\s,0) -- (\s,-18);}
 \foreach \s in {0,-3,-8,-13,-18}
 {\draw[-] (-3,\s) -- (19,\s);}
 \foreach \s/\d in {-1.5/1,-5.5/2,-10.5/3,-15.5/4}
 \node at (-1.5,\s) {$\deg = \d$};
 \end{scope}
 
 \begin{scope}[xshift=-4.8cm,yshift=5cm]    
 \node[main node] (3) at (0,2) {{\tiny\bf 1}};
 \node[main node] (4) at (0,3) {{\tiny\bf 0}};
 \path[every node/.style={font=\sffamily\small}]
    (3) edge[bend right=5]  node {} (4);
\node at (0,4) {\footnotesize $X_1$};
\end{scope}

 \begin{scope}[xshift=-4.8cm,yshift=2cm]    
 \node[main node] (1) at (0,0) {{\tiny\bf 1}};
 \node[main node] (2) at (0,1) {{\tiny\bf 8}};
 \node[main node] (3) at (0,2) {{\tiny\bf 8}};
 \node[main node] (4) at (0,3) {{\tiny\bf 1}};
 \path[every node/.style={font=\sffamily\small}]
    (1) edge[bend right=8]  node {} (2)
    (1) edge[bend right=24] node {} (2)
    (1) edge[bend left=24] node {} (2)
    (1) edge[bend left=8] node {} (2)
    (2) edge[bend right=8] node {} (3)
    (2) edge[bend right=24] node {} (3)
    (2) edge[bend right=40] node {} (3)
    (2) edge[bend right=56] node {} (3)
    (2) edge[bend left=8] node {} (3)
    (2) edge[bend left=24] node {} (3)
    (2) edge[bend left=40] node {} (3)
    (2) edge[bend left=56] node {} (3)
    (3) edge[bend right=8]  node {} (4)
    (3) edge[bend right=24] node {} (4)
    (3) edge[bend left=24] node {} (4)
    (3) edge[bend left=8] node {} (4);
\node at (0,4) {\footnotesize $X_{SS}$};
\end{scope}

 \begin{scope}[xshift=-0.5cm,yshift=1cm]    
 \node[main node] (2) at (0,1) {{\tiny\bf 2}};
 \node[main node] (3) at (0,2) {{\tiny\bf 1}};
 \node[main node] (4) at (0,3) {{\tiny\bf 0}};
 \path[every node/.style={font=\sffamily\small}]
    (2) edge[bend right=5] node {} (3)
    (3) edge[bend right=5]  node {} (4);
\node at (0,5) {\footnotesize $X_2$};
\end{scope}

 \begin{scope}[xshift=-6.5cm,yshift=-3cm]    
 \node[main node] (1) at (0,0) {{\tiny\bf 1}};
 \node[main node] (2) at (1.1*3,0) {{\tiny\bf 1}};
 \node[main node] (3) at (1.1*1.5,1.1*2.6) {{\tiny\bf 1}};
 \node[main node] (4) at (1.1*.75,1.1*.43) {{\tiny\bf  3}};
 \node[main node] (5) at (1.1*2.25,1.1*.43) {{\tiny\bf 3}};
 \node[main node] (6) at (1.1*1.5,1.1*1.73) {{\tiny\bf 3}};
 \path[every node/.style={font=\sffamily\small}]
    (1) edge[]  node {} (4)
    (1) edge[bend right=16] node {} (4)
    (1) edge[bend left=16] node {} (4)
    (2) edge[]  node {} (5)
    (2) edge[bend right=16] node {} (5)
    (2) edge[bend left=16] node {} (5)
    (4) edge[]  node {} (6)
    (4) edge[bend right=16] node {} (6)
    (4) edge[bend left=16] node {} (6)
    (6) edge[]  node {} (5)
    (6) edge[bend right=16] node {} (5)
    (6) edge[bend left=16] node {} (5)
    (4) edge[]  node {} (5)
    (4) edge[bend right=16] node {} (5)
    (4) edge[bend left=16] node {} (5)
    (3) edge[]  node {} (6)
    (3) edge[bend right=16] node {} (6)
    (3) edge[bend left=16] node {} (6);
\node at (2,4) {\footnotesize $X_{SSS}$};
\end{scope}

 \begin{scope}[xshift=-1.8cm,yshift=-3cm]
 \node[main node] (1) at (.66,0) {{\tiny\bf 6}};
 \node[main node] (2) at (2,0) {{\tiny\bf 6}};
 \node[main node] (3) at (2.67,1.2) {{\tiny\bf 1}};
 \node[main node] (4) at (2,2.4) {{\tiny\bf  2}};
 \node[main node] (5) at (.66,2.4) {{\tiny\bf 2}};
 \node[main node] (6) at (0,1.2) {{\tiny\bf 1}};
 \path[every node/.style={font=\sffamily\small}]
    (1) edge[bend right=8] node {} (2)
    (1) edge[bend right=24] node {} (2)
    (1) edge[bend right=40] node {} (2)
    (1) edge[bend left=8] node {} (2)
    (1) edge[bend left=24] node {} (2)
    (1) edge[bend left=40] node {} (2)
    (2) edge[]  node {} (3)
    (2) edge[bend right=16] node {} (3)
    (2) edge[bend left=16] node {} (3)
    (3) edge[]  node {} (4)
    (4) edge[bend right=8] node {} (5)
    (4) edge[bend left=8] node {} (5)
    (5) edge[]  node {} (6)
    (6) edge[]  node {} (1)
    (6) edge[bend right=16] node {} (1)
    (6) edge[bend left=16] node {} (1);
\node at (1.5,4) {\footnotesize $X_{S2}$};
 \end{scope}
 
 \begin{scope}[xshift=3.5cm,yshift=-3cm]    
 \node[main node] (1) at (0,0) {{\tiny\bf 1}};
 \node[main node] (2) at (0,1) {{\tiny\bf 2}};
 \node[main node] (3) at (-1,2) {{\tiny\bf 2}};
 \node[main node] (4) at (1,2) {{\tiny\bf 2}};
 \path[every node/.style={font=\sffamily\small}]
    (1) edge[]  node {} (2)
    (2) edge[]  node {} (3)
    (3) edge[]  node {} (4)
    (2) edge[]  node {} (4);
\node at (0,4) {\footnotesize $X_3$};    
\end{scope}

 \begin{scope}[xshift=6cm,yshift=-3cm]    
 \node[main node] (1) at (0,0) {{\tiny\bf 3}};
 \node[main node] (2) at (0,1) {{\tiny\bf 2}};
 \node[main node] (3) at (0,2) {{\tiny\bf 2}};
 \node[main node] (4) at (0,3) {{\tiny\bf 1}};
 \path[every node/.style={font=\sffamily\small}]
    (1) edge[]  node {} (2)
    (1) edge[bend right=15] node {} (2)
    (1) edge[bend left=15] node {} (2)
    (2) edge[]  node {} (3)
    (3) edge[]  node {} (4);
\node at (0,4) {\footnotesize $X_S$};    
\end{scope}

\begin{scope}[xshift=-6.5cm,yshift=-8cm,scale=0.85]
 \node[main node] (1) at (1,0) {{\tiny\bf 4}};
 \node[main node] (2) at (3,0) {{\tiny\bf 4}};
 \node[main node] (3) at (4,1.8) {{\tiny\bf 4}};
 \node[main node] (4) at (3,3.6) {{\tiny\bf 4}};
 \node[main node] (5) at (1,3.6) {{\tiny\bf 4}};
 \node[main node] (6) at (0,1.8) {{\tiny\bf 4}};
 \node[main node] (7) at (2,.6) {{\tiny\bf 1}};
 \node[main node] (8) at (3,2.4) {{\tiny\bf 1}};
  \node[main node] (9) at (1,2.4) {{\tiny\bf 1}};
   \node[main node] (10) at (2,1.8) {{\tiny\bf 1}};
 \path[every node/.style={font=\sffamily\small}]
    (1) edge[bend right=30] node {} (2)
    (1) edge[] node {} (2)
    (1) edge[bend right=10] node {} (2)
    (1) edge[bend right=20] node {} (2)    
    (3) edge[] node {} (4)
    (3) edge[bend right=10] node {} (4)
    (3) edge[bend right=20] node {} (4)
    (3) edge[bend right=30] node {} (4)  
    (5) edge[] node {} (6)
    (5) edge[bend right=10] node {} (6)
    (5) edge[bend right=20] node {} (6)
    (5) edge[bend right=30] node {} (6)
    
    (1) edge[double distance=1.5pt]  node {} (10)
    (3) edge[double distance=1.5pt]  node {} (10)
    (5) edge[double distance=1.5pt]  node {} (10)
    (2) edge[double distance=1.5pt]  node {} (7)
    
    (2) edge[double distance=1.5pt]  node {} (8)
    (4) edge[double distance=1.5pt]  node {} (8)
    
    (4) edge[double distance=1.5pt]  node {} (9)
  
    (6) edge[double distance=1.5pt]  node {} (9)
    (3) edge[double distance=1.5pt]  node {} (7)
    (5) edge[double distance=1.5pt]  node {} (8)
   
    (6) edge[double distance=1.5pt]  node {} (7) 
    (1) edge[double distance=1.5pt]  node {} (9);
\node at (2.1,4.7) {\footnotesize $X_{43}$};    
 \end{scope}

 \begin{scope}[xshift=-0.4cm,yshift=-5cm]

  \node[main node] (1) at (0,0) {{\tiny\bf  1}};
  \node[main node] (2) at (-1,-1) {{\tiny\bf 2}};
  \node[main node] (3) at (0,-1) {{\tiny\bf 4}};
  \node[main node] (4) at (1,-1) {{\tiny\bf 2}};
  \node[main node] (5) at (-1,-2) {{\tiny\bf 2}};
  \node[main node] (6) at (0,-2) {{\tiny\bf 4}};
  \node[main node] (7) at (1,-2) {{\tiny\bf 2}};
  \node[main node] (8) at (0,-3) {{\tiny\bf  1}};
  \path[every node/.style={font=\sffamily\small}]
    (1) edge node {} (2)
    (1) edge[bend right=10] node {} (3)
    (1) edge[bend left=10] node {} (3)
          edge node {} (4)
    (2) edge[bend right=10] node {} (5)
    (2) edge[bend left=10] node {} (5)

    (3) edge[bend right=10] node {} (6)
    (3) edge[bend left=10] node {} (6)
    (3) edge[bend right=25] node {} (6)
    (3) edge[bend left=25] node {} (6)
    (4) edge[bend right=10] node {} (7)
    (4) edge[bend left=10] node {} (7)

    (8) edge node {} (5)
    (8) edge[bend right=10] node {} (6)
    (8) edge[bend left=10] node {} (6)
     (8)  edge node {} (7);
\node at (0,1) {\footnotesize $X_{21}$};
\end{scope}

 \begin{scope}[xshift=3cm,yshift=-5cm]
 \node[main node] (1) at (0,0) {{\tiny\bf 1}};
 \node[main node] (2) at (0,-1) {{\tiny\bf 4}};
 \node[main node] (3) at (0,-2) {{\tiny\bf 4}};
 \node[main node] (4) at (0,-3) {{\tiny\bf  1}};
 \node[main node] (5) at (1,-0.5) {{\tiny\bf 2}};
 \node[main node] (6) at (1,-1.5) {{\tiny\bf 4}};
 \node[main node] (7) at (1,-2.5) {{\tiny\bf 2}};
 \path[every node/.style={font=\sffamily\small}]
    (1) edge[bend right=10] node {} (2)
    (1) edge[bend left=10] node {} (2)
    (1) edge node {} (5)
    (2) edge[bend right=10] node {} (3)
    (2) edge[bend left=10] node {} (3)
    (2) edge[bend right=25] node {} (3)
    (2) edge[bend left=25] node {} (3)
    (3) edge[bend right=10] node {} (4)
    (3) edge[bend left=10] node {} (4)
    (5) edge[bend right=10] node {} (6)
    (5) edge[bend left=10] node {} (6)
    (6) edge[bend right=10] node {} (7)
    (6) edge[bend left=10] node {} (7)
    (4) edge node {} (7);
    \node at (0,1) {\footnotesize $X_{22}$};
 \end{scope}

 \begin{scope}[xshift=6cm,yshift=-5cm]
 \node[main node] (2) at (0,0) {{\tiny\bf 2}};
 \node[main node] (3) at (1,-1) {{\tiny\bf 4}};
 \node[main node] (4) at (0,-1) {{\tiny\bf 2}};
 \node[main node] (5) at (0,-2) {{\tiny\bf 2}};
 \node[main node] (1) at (0,-3) {{\tiny\bf  1}};
 \path[every node/.style={font=\sffamily\small}]
    (2) edge node {} (4)
    (3) edge[bend right=10] node {} (4)
    (3) edge[bend left=10] node {} (4)
    (4) edge node {} (5)
    (1) edge node {} (5);
\node at (0,1) {\footnotesize $X_{11}$};
 \end{scope}

 \begin{scope}[xshift=9.5cm,yshift=-5cm]]
 \node[main node] (1) at (0,0) {{\tiny\bf  2}};
 \node[main node] (2) at (-1,-1) {{\tiny\bf 2}};
 \node[main node] (3) at (1,-1) {{\tiny\bf 2}};
 \node[main node] (4) at (0,-2) {{\tiny\bf 2}};
 \node[main node] (5) at (0,-3) {{\tiny\bf 1}};
 \path[every node/.style={font=\sffamily\small}]
    (1) edge node {} (2)
         edge node {} (3)
    (2) edge node {} (4)
    (3) edge node {} (4)
    (4) edge node {} (5);
    \node at (0,1) {\footnotesize $X_{10}$};
 \end{scope}
\end{tikzpicture}
\caption{Intersection graphs for the effective cones.}
\label{tab:int}
\end{table}
\end{center}
\end{introthm}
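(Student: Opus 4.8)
The plan is to proceed type by type through the twelve del Pezzo elliptic varieties with finite Mordell--Weil group (exactly those appearing in Table~\ref{tab:int}), using for each one the explicit model furnished by the classification of Section~\ref{sec:types}. This description produces a distinguished basis of $\Cl(X)$, the class of $-K_X$, the list of $\pi$-vertical prime divisors (the components of the fibres over the branch/discriminant locus), and, via Theorem~\ref{thm:mw}, the finite set of classes of sections of $\pi$, one for each element of $\MW(\pi)$. Let $C\subseteq\Cl(X)_\rr$ denote the convex cone spanned by the vertical classes and the section classes. Computing the pairings $\langle D,D'\rangle$ of~\eqref{bil} among these finitely many generators is then a bounded calculation whose output is exactly Table~\ref{tab:int}; the real content is the two equalities $\Eff(X)=C$ and $\Mov(X)=C^\vee$.

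For the effective cone the organising tool is restriction to a general elliptic surface. Intersecting $X$ with $n-2$ general members of the base-point-free system $|\pi^*\Osh_{\pp^{n-1}}(1)|$ --- equivalently, taking $S=\pi^{-1}(L)$ for $L\subseteq\pp^{n-1}$ a general line --- yields a smooth surface on which $\pi$ restricts to a relatively minimal rational elliptic surface $S\to\pp^1$; by construction the form~\eqref{bil} is the pull-back along $r\colon\Cl(X)\to\Cl(S)$ of the intersection pairing of $S$, and $r$ carries sections of $\pi$ to sections of $S\to\pp^1$, $\pi$-vertical prime divisors to vertical curves, and identifies $\MW(\pi)$ with the Mordell--Weil group of $S\to\pp^1$, again finite. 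The inclusion $C\subseteq\Eff(X)$ is clear. Conversely, since $\Eff(X)$ is spanned by classes of prime divisors, it suffices to place the class of an arbitrary prime divisor $D$ inside $C$: if $D$ is $\pi$-vertical this is read off from the model's description of the discriminant, and if $D$ dominates $\pp^{n-1}$ then $r[D]=[D|_S]$ is an effective class on $S$, hence lies in $\overline{\NE}(S)$ --- and for each of the relevant surfaces $S$, whose fibre configuration and (finite) Mordell--Weil group are explicit, $\overline{\NE}(S)$ is the rational polyhedral cone generated by the classes of sections and of components of reducible fibres (by adjunction and the cone theorem the only extremal rays are the vertical curves and the sections). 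Tracing this back through $r$, which we check in each case to be injective and compatible with these generators, forces $[D]\in C$.

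For the moving cone one inclusion is again immediate from restriction: if $M\in\Mov(X)$ then, $\Bs|mM|$ having codimension $\ge 2$, the divisor $M|_S$ has no fixed component for general $S$ and hence is nef on the surface $S$; thus $\langle M,D\rangle=(M|_S)\cdot(D|_S)\ge 0$ for every generator $D$ of $C$, i.e. $M\in C^\vee$, so $\Mov(X)\subseteq C^\vee$. The opposite inclusion $C^\vee\subseteq\Mov(X)$ is where the explicit birational geometry of each type enters: working in the chosen model one exhibits, for each face of $C^\vee$, either a morphism contracting $X$ or a small $\qq$-factorial modification whose nef cone is the adjacent chamber, so that $C^\vee$ is swept out by movable classes. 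Along the way this shows that each of the twelve varieties is a Mori dream space, which is precisely the input needed for Theorem~\ref{cox-mw}.

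I expect the principal obstacle to lie in the two reverse inclusions. For $\Eff(X)\subseteq C$ one must determine precisely the $\pi$-vertical prime divisors --- equivalently, the components of the discriminant and the types of the degenerate fibres --- in every model, and one must control $r$ closely enough that no effective class on $S$ escapes $C$ when pulled back. For $C^\vee\subseteq\Mov(X)$ one must actually produce all the small modifications, and this is heaviest for the degree $4$ types $X_{43},X_{21},X_{22},X_{11},X_{10}$, whose fibre configurations and chamber structures are the most intricate; the lower-degree cases serve as a warm-up and largely follow the pattern set in~\cite{hltu} for degree $3$.
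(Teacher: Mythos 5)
Your reduction of $\Eff(X)$ to the surface $S=\pi^{-1}(L)$ rests on a claim that is false and that breaks the argument: restriction to a general line does \emph{not} identify $\MW(\pi)$ with the Mordell--Weil group of $S\to\pp^1$, since new sections appear over the smaller function field $\cc(L)$. Already in degree one, $\MW(\pi)=\langle 0\rangle$ while, for general $L$, all fibres of $S\to\pp^1$ are irreducible, so by Shioda--Tate the surface fibration has Mordell--Weil rank $8$. Consequently $\overline{\NE}(S)=\Eff(S)$ is not the rational polyhedral cone you describe: it is generated by infinitely many sections (plus fibre components), almost none of which lie in the image of $r\colon\Cl(X)\to\Cl(S)$. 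The ``trace back through $r$'' step therefore fails: knowing $[D|_S]\in\Eff(S)$ only expresses that class as a nonnegative combination of classes outside $r(\Cl(X)_\rr)$, and what you would actually need is $\Eff(S)\cap r(\Cl(X)_\rr)=r(C)$, a statement essentially equivalent to the one being proved and nowhere established. In addition, the reverse inclusion $C^\vee\subseteq\Mov(X)$ is only announced as a program (produce all contractions and small modifications realizing the chambers), not carried out; for degree four this is precisely the hard content (compare Proposition~\ref{mc}).

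The paper avoids both difficulties by a duality sandwich. With $\mathcal M$ the cone spanned by the sections and the vertical classes, one has $\bigcap_i{\rm cone}(\varrho_1,\dots,\widehat{\varrho_i},\dots,\varrho_n)\subseteq\Mov(X)\subseteq\Eff(X)^\vee\subseteq\mathcal M^\vee$, where the middle inclusion is Proposition~\ref{fimov} (your restriction argument proves exactly this part correctly, since a movable class restricts to a class without fixed components on the general $S$). It then suffices to check that the extremal rays of $\mathcal M^\vee$ are movable: for $d\leq 3$ this is read off from the known Cox rings (Theorem~\ref{cox:12} and \cite{hltu}); for $X_{43},X_{22},X_{21}$ the outer cones of the chain already coincide; for $X_{11}$ and $X_{10}$ one verifies directly that $|H-2E_1|$ has base locus of codimension two and that $3H-4E_1-4E_2$ is the image of $H$ under the generalized Geiser involution. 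Equality throughout gives $\Mov(X)=\Eff(X)^\vee=\mathcal M^\vee$, and $\Eff(X)=\mathcal M$ then follows by dualizing with respect to the nondegenerate form, with no need to control arbitrary prime divisors of $X$ or the (non-polyhedral) effective cone of $S$.
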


The paper is structured as follows. In Section~\ref{sec:setup},
we introduce del Pezzo elliptic fibrations and del Pezzo elliptic
varieties and we define the bilinear form on the Picard group
of such varieties. In Section~\ref{sec:types}, we study 
the geometry of these varieties and in the next section
we use these results in order to classify the Mordell-Weil groups 
of the del Pezzo elliptic fibrations, their vertical classes and sections. 
Section~\ref{sec:cones} contains the description 
of the nef, effective and moving cones of del 
Pezzo elliptic varieties and
moreover in the same section we prove 
Theorem~\ref{cox-mw}. In the last section, we 
provide the Cox rings of the del Pezzo elliptic varieties
whose fibration has finite Mordell-Weil
group and having degree one, two and four (few examples),  
and a lemma about the Cox ring of the blow-up in one point of the 
complete intersection of two quadrics. 

\section{Del Pezzo elliptic varieties}
\label{sec:setup}

Let $Y$ be a del Pezzo variety of dimension $n\geq 3$
such that $-K_Y=(n-1)H$, with $H$ ample and $d:=H^n\leq 4$. 
It is well known (see for instance~\cite{dp}) that 
the picard group
of $Y$ has rank one and it is generated by the class $H$. 
Let us recall the following. If $d=1$ then $Y$ is a smooth 
hypersurface of degree six of the weighted projective space
$\pp(3,2,1,\dots,1)$ and $H$ is the restriction of a degree one 
class of the ambient space.
If $d=2$ then $Y$ is a double cover of $\pp^n$ 
branched along a smooth quartic hypersurface 
and $H$ is the pull-back of a hyperplane of $\pp^n$.
If $d\in\{3,4\}$ then $Y$ is a projectively normal subvariety
of $\pp^{n+d-2}$ and $H$ is the class of a hyperplane
section. 

Let us consider a $n-1$-dimensional sublinear system of
$|H|$, whose base locus $Z$ has dimension zero and 
length $d$. In particular, if $d=1$ we have $Z=V(x_3,\dots,x_{n+2})$,
if $d=2$, $Z$ is preserved by the covering involution and
if $d\in\{3,4\}$, $Z$ spans a linear subspace $\Lambda\subseteq
\pp^{n+d-2}$ of dimension $d-2$.
Let us denote by $\pi_{Z}\colon Y\dashrightarrow\pp^{n-1}$ 
the rational map defined by the given system
and by $\pi\colon X\to\pp^{n-1}$ 
the resolution of the indeterminacy of 
$\pi_Z$. 
The variety $X$ comes with two morphisms:
\[
 \xymatrix{
  X\ar[r]^-\pi\ar[d]_-\sigma & \pp^{n-1}\\
  Y\ar@{-->}[ur]_-{\pi_Z} 
 }
\]
where $\sigma$ is the composition 
of $d$ blowing-ups $\sigma_1,
\dots ,\sigma_d$ at the points $q_1,...,q_d$, respectively.
Moreover, assuming that $\Lambda$ is not contained
in the tangent space of $Y$ at any point of $Z$
when $d=4$, the general fiber of $\pi$ is a smooth genus one
curve, that is $\pi$ is an elliptic fibration.

In what follows, by abuse of notation, we use the same letter 
$H$ to denote the pull-back of $H$ via $\sigma$ while 
we denote by $E_i$ the pull-back of the exceptional
divisor of $\sigma_i$, for $i\in\{1,\dots,d\}$.
Observe that some of the points $q_2,\dots,q_d$
can lie on the exceptional divisor of one of the $\sigma_i$'s.
Therefore $E_i$ can be either a $\pp^{n-1}$ 
or the union of a $\pp^{n-1}$ with some
other components isomorphic to 
the projectivization $\mathbb{F}$ of the vector bundle
$\Osh_{\pp^{n-1}}\oplus\Osh_{\pp^{n-1}}(1)$.
In any case, we can write
\[
 \Pic(X) = \langle H, E_1, \dots,E_d\rangle,
\]
where, with abuse of notation, we are adopting
the same symbols for the divisors and for their
classes. We will also adopt the following notation
\begin{equation}
\label{F}
 F := -\frac{1}{n-1}K_X .
\end{equation}
Observe that $F$ is the pull-back of a hyperplane
section of $\pp^{n-1}$ via $\pi$, so that  $F=
H-\sum_{i=1}^dE_i$.

\begin{remark}\label{tree}
The map $\sigma$ is a composition of blow-ups
at points and we claim that we blow up at most one point on each
exceptional divisor. Indeed, assume by contradiction
that there is a prime divisor $E$ which is the strict transform
of an exceptional divisor blown up at two or more points. 
The preimage $S$ of a general line $\ell$ of $\pp^{n-1}$ via $\pi$
is a rational elliptic surface with nef anticanonical
class which contains a prime divisor $E|_S$ of self-intersection
$<-2$, a contradiction.
\end{remark}

\subsection{A bilinear form on the Picard group}

Let now $X$ be the blow-up of $Y$ at $r$ general
points. Using the above notation for $F$,
we introduce a bilinear form on 
$\Pic(X)$ by setting 
\begin{equation}\label{bil}
 \langle A, B\rangle
 :=
 F^{n-2}\cdot A\cdot B
\end{equation}
for any two divisors $A$ and $B$ on $X$.
Thus the quadratic form $q$ induced by 
the above linear form is hyperbolic
and the matrix with respect to the basis 
$(H,E_1,\dots, E_r)$ is diagonal
with entries $d,-1,\dots,-1$.
Since $\langle F,F\rangle = d-r$,
the sublattice $F^\perp$ is negative
definite if $1<r<d$ and it is negative
semidefinite if $r = d$. In the first case, a basis
consists of the classes $E_1-E_2,\dots,E_{r-1}-E_r$,
while in the second case it consists of the
above classes plus $F$. These are roots lattices
of type $A_{r-1}$ and $\tilde{A}_{d-1}$,
respectively.

When $r=d$ and the linear system $|F|$ on the
blow-up $X$ induces the elliptic fibration $\pi
\colon X\to\pp^{n-1}$, we observe that $F^{n-2}$ is
rationally equivalent to a smooth rational elliptic
surface $S$ which is the preimage via $\pi$ of a line.
Thus we have $\langle A,B\rangle = A|_S\cdot B|_S$,
where the right hand side is the intersection product
in $\Pic(S)$.

\begin{proposition}\label{fimov}
\label{base}
Let $A$ and $B$ be effective divisors of $X$
with $B$ a prime divisor. If $\langle A,B\rangle < 0$
then $B$ is contained in the stable base locus
of $|A|$.
\end{proposition}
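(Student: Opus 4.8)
The plan is to reduce the statement to the intersection theory on the rational elliptic surface $S$ obtained as the preimage via $\pi$ of a general line $\ell\subset\pp^{n-1}$, exploiting the identity $\langle A,B\rangle = A|_S\cdot B|_S$ recorded just before the statement (valid in the relevant case $r=d$, and in the general case $1<r<d$ a similar surface section works). First I would observe that since $B$ is a prime divisor, either $B$ dominates $\pp^{n-1}$ under $\pi$ or $B$ is vertical, i.e. $B = \pi^*(\text{something})$ up to components. In the vertical case $\langle A,B\rangle = F^{n-2}\cdot A\cdot B \geq 0$ for $A$ effective (a vertical prime divisor is nef on the generic fiber side), so the hypothesis $\langle A,B\rangle<0$ forces $B$ to be horizontal, hence $B|_S$ is a nonzero effective divisor on $S$; moreover because $B$ is prime and horizontal, $B|_S$ is either irreducible or a sum of sections/multisections, and in any case it is a genuine curve class on $S$, not the zero class.

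Next I would argue on $S$ directly. We have $A|_S$ effective and $B|_S$ effective with $A|_S\cdot B|_S = \langle A,B\rangle < 0$. On a smooth projective surface, if an effective divisor $A'$ and a prime (or reduced irreducible) curve $C$ satisfy $A'\cdot C<0$, then $C$ must appear in the support of $A'$: write $A' = mC + A''$ with $A''$ effective not containing $C$; then $A'\cdot C = mC^2 + A''\cdot C$, and since $A''\cdot C\geq 0$ we need $m\geq 1$ and $C^2<0$. The subtlety is that $B|_S$ need not be irreducible as a divisor on $S$ even though $B$ is prime on $X$; but one can instead run the argument with a single irreducible component $C$ of $B|_S$ for which $A|_S\cdot C<0$ — such a component exists because $A|_S\cdot B|_S<0$ and $B|_S$ is effective — and conclude that this $C$ lies in the support of $A|_S$. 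Then $C$ is contained in the base locus of $|A|_S|$, and since $|A|_S|$ is the restriction of $|A|$ to $S$ and $S$ moves in a covering family of such surface sections, the corresponding horizontal prime divisor (namely $B$, whose restriction to the general $S$ contains such a component) is forced into the base locus of $|A|$.

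To make the last step clean I would phrase it as follows: for the general line $\ell$, the surface $S_\ell$ meets $B$ properly and $B|_{S_\ell}$ has an irreducible component $C_\ell$ with $A|_{S_\ell}\cdot C_\ell<0$; by the surface computation $C_\ell\subseteq\operatorname{Bs}|A|_{S_\ell}|$, and since every section of $\Osh_X(A)$ restricts to a section of $\Osh_{S_\ell}(A|_{S_\ell})$, every element of $|A|$ contains $C_\ell$. As $\ell$ varies over an open dense set of lines, the curves $C_\ell$ sweep out $B$ (they sweep out a divisor dominating $\pp^{n-1}$, which is contained in $B$ and hence equals $B$ as $B$ is prime). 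Therefore $B$ is contained in the base locus of every element of $|A|$, a fortiori in the stable base locus $\bigcap_{m\geq 1}\operatorname{Bs}|mA|$ — indeed the same argument applies verbatim to $mA$ since $\langle mA,B\rangle = m\langle A,B\rangle<0$.

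The main obstacle I anticipate is bookkeeping around the possible reducibility and non-reducedness of $B|_S$ and of the exceptional loci: the divisors $E_i$ on $X$ may themselves be reducible (a $\pp^{n-1}$ together with $\mathbb{F}$-components, as noted in the excerpt), so one must be slightly careful that "$B$ prime on $X$ restricts to something with an irreducible component of negative intersection against $A|_S$" really holds for the general $S$, rather than only after discarding bad fibers. This is handled by choosing $\ell$ general so that $S_\ell$ avoids the finitely many non-general fibers and meets $B$ transversally along its smooth locus; then $B|_{S_\ell}$ is reduced and its components are honest irreducible curves on the smooth surface $S_\ell$, and the self-intersection inequality $C_\ell^2<0$ emerges automatically from $A|_{S_\ell}\cdot C_\ell<0$ with $A|_{S_\ell}$ effective. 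Everything else is standard surface intersection theory.
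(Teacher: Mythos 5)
Your overall strategy (restrict to the rational elliptic surface $S=\pi^{-1}(\ell)$ for a general line $\ell$, use the surface fact that an effective divisor meeting a curve negatively must contain it, then vary $\ell$ and pass to $mA$ for the stable base locus) is exactly the paper's, but your reduction to the horizontal case contains a genuine error. You claim that if $B$ is vertical then $\langle A,B\rangle=F^{n-2}\cdot A\cdot B\geq 0$ for every effective $A$, so that the hypothesis forces $B$ to dominate $\pp^{n-1}$. This is false: vertical prime divisors restrict on $S$ to (sums of) components of singular fibres, which have negative self-intersection. For instance, taking $A=B$ equal to the strict transform of an exceptional divisor with class $E_1-E_2$ (type $X_2$), or a divisor of class $H-2E_i-2E_j$ in degree four, one has $\langle B,B\rangle<0$; indeed the vertex labels $-\langle D,D\rangle$ in Table~\ref{tab:int} are strictly positive for all vertical generators. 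So the hypothesis $\langle A,B\rangle<0$ does \emph{not} exclude vertical $B$, and this is precisely the case the proposition is needed for: in the proof of Theorem~\ref{eff-mov-cones} it is applied with $B$ ranging over the generators of $\Eff(X)$, most of which are vertical.

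Because your concluding sweeping step ("the curves $C_\ell$ sweep out a divisor dominating $\pp^{n-1}$, hence equal to $B$") is tailored to horizontal $B$, the vertical case is left unproved, so the argument as written only establishes the statement for horizontal $B$. The repair is not difficult but must be made: for vertical $B$ and general $\ell$, the negativity $A|_S\cdot B|_S<0$ forces some irreducible component $C$ of $B\cap\pi^{-1}(p)$, for some $p\in\ell\cap\pi(B)$, to satisfy $A|_S\cdot C<0$, hence $C\subseteq\Bs|A|$; if $B\not\subseteq\Bs|A|$, then $B\cap\Bs|A|$ has dimension at most $n-2$, so the locus of points $p\in\pi(B)$ over which it contains a curve has codimension at least two in $\pp^{n-1}$ and is avoided by a general line, a contradiction. (The paper compresses all of this into the assertion that for general $\ell$ the restriction $B|_S$ is prime, which for horizontal $B$ is Bertini irreducibility, and then varies $\ell$.) Your remaining points — choosing a component with negative intersection, the fact that sections of $\Osh_X(A)$ restrict to $S$ for general $\ell$, and replacing $A$ by $mA$ — are fine and agree with the paper.
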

\begin{proof}
Let $\ell$ be a general line of $\pp^{n-1}$ and let $S$
be the surface $\pi^{-1}(\ell)$. According
to the definition of the bilinear form we have 
$A|_S\cdot B|_S < 0$. Being $B$ prime and $\ell$
general, the divisor $B|_S$ of $S$ is prime as well.
Thus the linear series $|A_{|S}|$ contains $B|_S$
into its base locus and the same holds for the linear
series $|A|$. Varying $\ell$ we get the claim.
\end{proof}

\section{Types}
\label{sec:types}

In this section we are going to describe 
the possible types of del Pezzo elliptic varieties 
of 
degree $d\leq 4$. 

\subsection{Degree one}
In this case $Y$ is a degree $6$ hypersurface
of the $(n+1)$-dimensional weighted 
projective space $\pp(3,2,1,\dots,1)$. 
After applying a change of coordinates,
we can assume that a defining equation
for $Y$ is
\begin{equation}
\label{type:1}
 x_1^2-x_2^3+x_2f_4+f_6 = 0,
\end{equation}
where $f_t$ is a degree $t$ homogeneous
polynomial in $x_3,\dots,x_{n+2}$. The 
blow-up $\sigma\colon X\to Y$ is centered
at the point $q=(1,1,0,\dots,0)\in Y$ and
the rational map $Y\dashrightarrow \pp^{n-1}$ is 
defined by $(x_1,\dots,x_{n+2})\mapsto (x_3,\dots,x_{n+2})$.

\subsection{Degree two}
In this case $Y$ is a double covering 
$\varphi\colon Y\to \pp^n$ branched
along a smooth quartic hypersurface $S$. 
In order to 
distinguish the different cases that can occur, we
observe that the preimage of a line $\ell$ through 
a point $p:=\varphi(q_1)$ is one of the following:
\begin{equation}
\label{fibres:cover}
 \varphi^{-1}(\ell)
 =
 \begin{cases}
 \text{elliptic curve} & \text{if $|\ell\cap S| = 4$}\\
 \text{rational nodal curve} & \text{if $|\ell\cap S| = 3$}\\
 \text{union of two smooth rational curves} & \text{if $\ell$ is bitangent to $S$.}\\
 \end{cases}
 \end{equation}
Therefore we distinguish three 
different cases depending on the position of $p$
with respect to $S$ and on the dimension of
the variety $B\subseteq\pp^n$ spanned by the
bitangent to $S$ passing through $p$.

{\em Case $1$}.
The point $p$ does not lie on $S$ and 
$B$ is not a hypersurface. In this case the preimage of 
$p$ in the double covering $Y\to\pp^n$ consists 
of two distinct points $q_1$ and $q_2$. 
We denote by $X_{11}$ the variety that we
obtain by blowing up these two points.

{\em Case $2$}.
The point $p$ does not lie on $S$ and
$B$ is a hypersurface. In this case after a linear change 
of coordinates we can assume that $p=(0,\dots,0,1)$.
An equation for $Y$ has the following form
\begin{equation}
\label{type:ss}
 x_{n+2}^2 = g + h^2,
\end{equation}
where $g\in\cc[x_1,\dots,x_n]$ is a homogeneous polynomial of
degree four such that $V(g)$ is the cone spanned by the bitangents
through $p$, while $h\in \cc[x_1,\dots,x_{n+1}]$ is a homogeneous 
polynomial of degree two such that $h(p)\neq 0$ and $S=V(g+h^2)$.
We denote by $X_{SS}$ the variety 
obtained by blowing up the two distinct points
$q_1$ and $q_2$ in the pre-image of $p$.
 
{\em Case $3$}.
The point $p$ lies on $S$. In this situation 
$B$ cannot be a hypersurface since otherwise $S$
would be singular. In order to get an elliptic fibration
we need to blow up the point $q_1:=\varphi^{-1}(p)$
and the point on the exceptional divisor 
which is invariant with respect to the lifted involution.
We denote by $X_2$ the variety that we obtain
after the blowing-ups. 
In this case an equation for $Y$ has the following form
\begin{equation}
\label{type:2}
 x_{n+2}^2 = x_nx_{n+1}^3 + f.
\end{equation}
where $f\in\cc[x_1,\dots,x_{n+1}]$ is a homogeneous polynomial
of degree four which does not contain monomials of
degree $\geq 3$ in the variable $x_{n+1}$.
The point $q_1$ has coordinates $(0,\dots,0,1,0)$ and
the tangent space to $S$ at $p$ is $V(x_{n})$.

\subsection{Degree four} 
Let us first collect some fact about 
smooth complete intersections of two hyperquadrics
$Y:=Q\cap Q'\subseteq\pp^{n+2}$, for $n\geq 3$.
Observe that any quadric in the pencil 
$\Lambda$ generated by $Q$ and $Q'$ has rank 
at least $n+2$, since otherwise $Y$ would not be smooth,
and there are $n+3$ singular quadrics in the pencil, counting muliplicities. 
We claim that there are exactly $n+3$ quadrics of rank $n+2$ 
and their vertices  are in general position in $\pp^{n+2}$.
Indeed, let us suppose that either there are less than $n+3$
vertices or that they are not in general position.

In the former case the pencil of quadrics is tangent
to the discriminant hypersurface at some point.
Without loss of generality we can assume $Q$ 
to be a cone of vertex $p = (1,0,\dots,0)$ in 
diagonal form $g$ and the pencil $g+tg'$ is
tangent to the discriminant hypersurface 
at $t=0$. If the Hessian matrix of $g$ is $M$
and that of $g'$ is $M'$, the above tangency
condition is equivalent to the vanishing of the
following derivative
\[
 \frac{d}{dt}{\rm Det}(M+tM')|_{t=0}.
\]
Expanding the above derivative and using
the fact that $M$ is diagonal we see that the above
is equivalent to $m'_{11}=0$, that is $p\in Q'$.
This is not possible since it contradicts the smoothness
of $Y$.

In the latter case there exists a hyperplane $H\subseteq\pp^{n+2}$ 
containing all the vertices 
and if we restrict $\Lambda$ to $H$ we obtain a pencil 
$\Lambda_H$ of quadrics in $\pp^{n+1}$, containing at least 
$n+3$ singular quadrics (counting multiplicities). Hence all the 
quadrics of $\Lambda_H$ must be singular and by Bertini's theorem
their vertices are contained in the base locus of $\Lambda_H$.
This implies that all the vertices of these cones are in $Y$ 
and this is a contradiction since they give singular points
of $Y$.

In what follows we will denote by
$Q_1,\dots,Q_{n+3}$ the singular quadrics 
and by $p_1,\dots,p_{n+3}$ the corresponding 
vertices. By the above discussion, we can assume
that $p_i$ is the $i$-th fundamental point of $\pp^{n+2}$
for $i=1,\dots,n+3$, so that $Q_1$ and $Q_2$ are 
defined by diagonal forms. Moreover, after possibly rescaling the variables, 
we can assume the quadrics
to be defined by the following polynomials 
\begin{equation}
\label{equ:Y}
 x_2^2-x_3^2+x_4^2+\dots +x_{n+3}^2
 \qquad
 x_1^2-x_3^2+\alpha_4x_4^2+\dots + \alpha_{n+3}x_{n+3}^2
\end{equation}
respectively, where the coefficients $\alpha_i$ are distinct and
not in $\{0,1\}$.
Let us now prove the following result that
will be useful in the next section.

\begin{proposition}\label{inf}
Let $q_1$ and $q_2$ be two points of $Y$, possibly
infinitely closed. Then
the conics of $Y$ through these 
two points span a hypersurface of $Y$ if
and only if the line $\langle q_1,q_2\rangle$ 
passes through one of the vertices $p_i$.
\end{proposition}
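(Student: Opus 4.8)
The plan is to reduce the statement to a computation about conics on the complete intersection $Y = Q \cap Q' \subseteq \pp^{n+2}$ and about how the projection from a line behaves. First I would set up the geometry: the conics of $Y$ through $q_1$ and $q_2$ are the plane sections of $Y$ by planes $\Pi$ containing the line $L := \langle q_1, q_2\rangle$ (when the points coincide infinitely near, ``containing $L$'' should be read as ``containing $q_1$ and tangent to the corresponding direction''). Such a plane meets $Y$ in a conic precisely when $\Pi$ is not contained in either quadric and meets each $Q, Q'$ in a conic containing $L$; the residual intersection of $\Pi \cap Q$ with $L$ is then another line $M_\Pi$, and similarly for $Q'$, and the conic $\Pi \cap Y$ passes through the points $M_\Pi \cap M'_\Pi$. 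The union of all these conics sweeps out a subvariety of $Y$, and the question is exactly when its dimension is $n-1$ rather than $\leq n-2$.

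Next I would linearize. Fix homogeneous coordinates so that $L = V(x_3, \dots, x_{n+3})$ say, i.e. $L$ is the line through two of the coordinate points — but in general $L$ is an arbitrary line, so instead I would work intrinsically: the planes through $L$ form a $\pp^{n}$, namely $\pp(W/\langle L\rangle)$ where $W \cong \cc^{n+3}$ is the space spanned by $L$... more precisely the pencil of hyperplanes through $L$ is parametrized by a $\pp^{n}$ and planes through $L$ correspond to lines in an ambient $\pp^n$. Restricting the two quadratic forms to the $2$-dimensional family transverse to $L$ and keeping track of the ``residual line'' gives two maps to $\pp^{n+2}$, or better: I would show that the sweep of conics is a hypersurface iff the natural incidence correspondence $\{(\Pi, y) : y \in \Pi \cap Y\}$ has image of dimension $n-1$, and that this drops iff the two residual-line maps $\Pi \mapsto M_\Pi$ and $\Pi \mapsto M'_\Pi$ fail to separate, which happens exactly along a tangency/degeneracy locus of the pencil restricted to $L$. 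Concretely, restricting the pencil $\Lambda$ to the line $L$ gives a pencil of binary quadratic forms (a pencil of $0$-dimensional length-$2$ schemes on $L$); the residual lines degenerate precisely when $L$ meets the base locus of one of the $Q_i$ in the right way, i.e. when $L$ passes through a vertex $p_i$.

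The cleanest route, which I would actually carry out, is: a conic through $q_1, q_2$ lies in a unique quadric of the pencil iff ... no — rather, through two general points of $Y$ and a varying plane, the conic $\Pi \cap Y$ degenerates (breaks into two lines, one of them $L$ itself) exactly when $\Pi \cap Q$ or $\Pi \cap Q'$ contains $L$ as a component, i.e. when the restriction of $Q$ (resp. $Q'$) to $\Pi$ is $L \cdot M$ for a line $M$. I would count: the planes $\Pi \supseteq L$ with $\Pi \cap Q \supseteq L$ form a linear subspace of the $\pp^n$ of planes through $L$; its dimension jumps when $L$ is contained in $Q$ or, more relevantly, when the polar behaviour of $L$ with respect to the quadrics in the pencil degenerates. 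The key algebraic fact I expect to need is: for a smooth quadric $Q$ and a line $L \not\subseteq Q$, the family of residual lines $\{M_\Pi\}$ sweeps a quadric cone or a smooth quadric surface in a suitable $\pp^3$, and the two families (for $Q$ and for $Q'$) fail to meet in the expected dimension exactly when $L$ passes through a common point of a singular member — and the only singular members of $\Lambda$ are the $Q_i$ with vertices $p_i$.

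The main obstacle will be the infinitely-near case and making the ``residual line'' map rigorous and dimension-controlled simultaneously for both quadrics: I need to show the sweep is a hypersurface in the ``if'' direction (when $L \ni p_i$, one of the quadrics restricted to any $\Pi \supseteq L$ automatically splits off $L$, forcing the residual conic to move in an $(n-1)$-dimensional family of lines through $q_1, q_2$, whose union is a hyperplane section of $Y$ — here I would use that $Q_i$ being a cone over $p_i$ makes its plane sections through a line through $p_i$ always reducible) and is \emph{not} a hypersurface otherwise (a dimension count on the incidence variety, using that for $L$ avoiding all vertices the two residual-line families are in general position so the conics through $q_1, q_2$ vary in an at most $(n-2)$-dimensional family). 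I would handle the infinitely-near degeneration by a limiting argument, replacing $q_2$ by $q_1$ plus a tangent direction and checking the above maps extend, using the explicit normal forms \eqref{equ:Y} to verify the boundary case if a synthetic argument proves awkward.
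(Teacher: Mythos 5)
Your central construction does not get off the ground: for a plane $\Pi$ containing $L:=\langle q_1,q_2\rangle$, the conic $\Pi\cap Q$ contains $L$ as a component only if $L\subseteq Q$, and generically $L$ is contained in neither $Q$ nor $Q'$ (if it were in both it would lie in $Y$). So the ``residual line'' maps $\Pi\mapsto M_\Pi,\ M'_\Pi$ on which the whole proposal rests are simply not defined, and the claimed criterion ``$\Pi\cap Y$ is a conic precisely when $\Pi$ meets each of $Q,Q'$ in a conic containing $L$'' is false: under that condition $\Pi\cap Y$ would contain the line $L$, forcing $L\subseteq Y$, whereas for a general $\Pi\supseteq L$ the intersection $\Pi\cap Y$ is just four points. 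The same confusion propagates into both directions of your argument: in the ``if'' direction the relevant family is not ``an $(n-1)$-dimensional family of lines through $q_1,q_2$'' but the $(n-2)$-dimensional family of \emph{planes of the cone $Q_i$ containing the generatrix $L$} (note $L\subseteq Q_i$ precisely because $p_i\in L$), each of which cuts any other member of the pencil in a conic through $q_1,q_2$ lying on $Y$; in the ``only if'' direction your ``tangency/degeneracy of the residual-line families'' has no content once the residual lines are gone.

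The missing idea, which is the paper's key lemma, is this: \emph{the plane of any conic $C\subseteq Y$ is contained in some quadric of the pencil $\Lambda$} (every member cuts that plane in a conic containing $C$; imposing one extra point of the plane not on $C$ on the one-parameter family $\Lambda$ produces a member whose restriction to the plane vanishes identically). Granting this, the conics through $q_1,q_2$ correspond exactly to planes through $L$ contained in a member of $\Lambda$, and any such member must contain $L$; so a hypersurface worth of conics means an $(n-2)$-dimensional family of planes through $L$ inside a single quadric of the pencil, which forces that quadric to be singular with vertex on $L$, i.e.\ $L\ni p_i$ (for a smooth member, or a cone with vertex off $L$, the planes through $L$ it contains form a family of dimension only $n-3$). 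Your second paragraph's suggestion of restricting $\Lambda$ to $L$ does contain a germ of this (it shows exactly one member of $\Lambda$ contains $L$ when $L\not\subseteq Y$), but without the lemma above you never reduce ``conic through $q_1,q_2$'' to ``plane through $L$ inside a member of $\Lambda$'', and the dimension count you promise has nothing to bite on. Finally, no separate limiting argument is needed for the infinitely near case: the argument only uses the line spanned by the length-two scheme.
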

\begin{proof}
If $p_i$ lies on the line $\langle q_1,q_2\rangle$,
then this line is a generatrix of the cone $Q_i$.
We can write $Y=Q_i\cap Q$, where $Q$ is
any other quadric of the pencil. We conclude 
observing that there exists an $(n-2)$-dimensional 
family of planes of $Q_i$ containing a generatrix 
and each of them intersects $Q$ along a conic
through the two fixed points.

Let us suppose now that the conics through 
$q_1$ and $q_2$ span a hypersurface $S$,
i.e. there exists an $(n-2)$-dimensional family
of such conics.
Observe that when we have a conic $C$ contained in
$Y$ then the plane $\pi_C$ of the conic must be contained
in one of the quadrics of the pencil $\Lambda$
(since the generic quadric of the pencil cuts $\pi_C$
along the curve $C$, imposing to pass through
one point of $\pi_C$ not lying on $C$ we get the whole plane).
Therefore, under our hypotheses, we must have
a quadric of the pencil containing an
 $(n-2)$-dimensional family of planes sharing the 
 line $\langle q_1,q_2\rangle$. Hence this quadric
 is a cone with vertex on that line.
 \end{proof}

Let us fix now a plane $\Pi\subseteq\pp^{n+2}$
and let us analyze the different types of
del Pezzo elliptic varieties of degree four.
By Proposition~\ref{inf} the type depends not
only on the number of points we blow up
but also on the number of vertices $p_i$ 
contained in the plane $\Pi$. Hence we are going to 
use the symbol $X_{kl}$ to denote the variety that
we obtain by choosing a plane $\Pi$ intersecting 
$Y$ in $k$ distinct points and containing $l$ vertices.

Let us spend few words about the geometry
of this construction and about the possible
values of $k$ and $l$ for $X_{kl}$.
We remark that we can write 
\[
 \Pi\cap Y=C\cap C'
\]
where $C:=Q\cap \Pi$ and $C':=Q'\cap \Pi$
are two plane conics. 
We discuss four cases.

{\rm Case 1}. 
If $\Pi$ contains no vertices, then we have two smooth conics, 
whose intersection consists of $k$ distinct points, for
$k\in\{1,2,3,4\}$ and hence we get the types 
$X_{40},\,X_{30},\,X_{20},\,X_{10}$.
Observe that when $k=2$ we have
two possibilities: either $C$ and $C'$ are tangent 
at their two intersection points $q_1$ and $q_2$, or 
they intersect transversally at $q_2$ and with 
multiplicity three at $q_1$.

{\rm Case 2}. 
If $\Pi$ contains one vertex $p_i$, then 
we can suppose that $C$ is a smooth
conic while $C':=\Pi \cap Q_i$ has (at least) 
a singular point at the vertex $p_i\in\Pi$.
The intersection of $C$ and $C'$ consists
of $k$ points, for $k\in\{1,2,3,4\}$ and we obtain
the types $X_{41},\,X_{31},\,X_{21},\,X_{11}$.
As before, when $k=2$ we have two 
possibilities. 
Either $C'$ is the union of two distinct lines and each
of them is tangent to the conic $C$, or $C'$ is a double line 
(which means that $\Pi$ is tangent to $Q_i$) 
intersecting $C$ in two distinct points.

{\rm Case 3}. 
If $\Pi$ contains two vertices, then
we can suppose that both $C$ and $C'$ are
singular and they can not intersect at the
vertices so that $k$ can be either $1,2$ or $4$.
Moreover, when $k=1$ we deduce that 
the plane $\Pi$ is contained in the tangent space to $Y$ at the 
only intersection point $q_1$. 
We are not going to consider this case since 
it does not give an elliptic fibration, being all
the fibers singular rational curves.
Hence we have only the two types $X_{42}$ and $X_{22}$.

{\rm Case 4}. 
Finally, observe that if $\Pi$ contains three vertices
then it is fixed and it can intersect $Y$ only at four
distinct points (otherwise $Y$ would be singular), 
giving case $X_{43}$.

\begin{remark}
We provide here an example of defining equations for $\Pi$ 
for each of the following five types:
\begin{align*}
 X_{43}:\hspace{5mm} & \Pi = V(x_4,x_5,\dots,x_{n+3})\\
 X_{22}:\hspace{5mm} & \Pi = V(x_3-x_4,x_5,\dots,x_{n+3})\\
 X_{21}:\hspace{5mm} & \Pi = V(\sqrt{\alpha_4+\alpha_5}\cdot x_2-\sqrt{\alpha_4+\alpha_5-2}\cdot x_3,x_4-x_5,\dots,x_{n+3})\\
 X_{11}:\hspace{5mm} & \Pi = V(x_1-\alpha_4\,x_2+(\alpha_4-1)\,x_3,x_5,\dots,x_{n+3})\\
 X_{10}:\hspace{5mm} & \Pi = V(2x_1-(\alpha_4+\alpha_5)x_2+(\alpha_4+\alpha_5-2)x_3,
 x_4-x_5,x_6,\dots,x_{n+3}),
\end{align*}
where $\alpha_4+\alpha_5\neq 0,2$ in cases $X_{21}$ and $X_{10}$.
\end{remark}

\begin{remark}
We recall that if $Y=Q\cap Q'\subseteq\pp^{n+2}$ and $n\geq 3$,
then through any point of $Y$ we have at least one 
line of $Y$. So let us fix a point $q_i\in Y$ and a line $\ell$
of $Y$, passing through this point, and let us describe the
fiber of $\pi\colon X\to \pp^{n-1}$ containing the strict 
transform of that line. The image of this fiber inside $Y$
is the curve obtained by intersecting $Y$ with the $\pp^3$ 
spanned by the plane $\Pi$ and the line $\ell$. This 
can also be described as the base locus of the pencil of quadric
surfaces obtained by restricting $\Lambda$ to the $\pp^3$
that we are considering.
Observe that any time we have a vertex $p_i$
in $\Pi$, the intersection of $Q_i$ with the $\pp^3$ is
a quadric cone containing a line not passing through 
$p_i$. Hence it must be the union of two planes
intersecting along a line passing through $p_i$.
Therefore, in Case 1 the image of the fiber inside $Y$ is obtained 
by intersecting two smooth quadric surfaces sharing a 
line and hence it is the union of that line and a rational 
normal cubic, intersecting in two points.
In Case 2 the base locus is the
intersection of a smooth quadric with a reducible
one and then it is the union of two lines and a
smooth conic. 
In Case 3 the base locus is the intersection of two reducible 
quadrics and hence it consists of four lines. 
Finally, in Case 4 we have the base locus of a pencil
containing three reducible quadrics.
Thus, after a possible 
renaming of the coordinates,
the pencil has the form $(x_1^2-x_2^2)+t(x_2^2-x_3^2)$.
All the quadrics in this pencil are
singular at the point $p=(1,1,1,1)$ and the
base locus of the pencil consists of four lines
intersecting at the point $p$. 
We remark that in this last case the corresponding 
fiber in $X$
is the union of four rational components
passing through one point and hence it is a type 
that does not appear in the Kodaira's list of singular fibers for 
elliptic surfaces. 
\end{remark}

\section{Mordell-Weil groups}
\label{sec:mw}

The main result of this section is the proof of Theorem~\ref{thm:mw}
but we postpone it to the end of the section and we begin by studying the 
{\em vertical divisors} of all the del Pezzo elliptic 
fibrations of degree $d\leq 4$, that is
divisors $D$ such that $\pi(D)$ is a hypersurface
of $\pp^{n-1}$.
If $d=1$, 
then the only vertical class is $F$ since the rank of the subgroup of
vertical divisors equals $\rk\Pic(X) - 1 = 1$.

When $d=2$, 
recall that there is a double covering 
$\varphi\colon Y\to\pp^n$ branched along a smooth quartic 
hypersurface $S$ and $\sigma\colon X\to Y$ 
is the blow-up of $Y$ at two points $q_1,q_2$
exchanged by the covering involution.
By~\eqref{fibres:cover}, if $D$ is a prime proper vertical
divisor of $X$ whose class is not a multiple of $F$,
then either $D$ is contained in the pull-back of an 
exceptional divisor of $\sigma$, or $\varphi(D)$ is 
covered by bitangent lines to $S$.
Therefore in case $X_{11}$ there are no proper vertical divisors.

In case $X_{SS}$ we have the two vertical classes
$2H-4E_1$, $2H-4E_2$, and assuming that $Y$ 
has the equation~\eqref{type:ss}, they are the
classes of the strict transforms
of $V(x_{n+2}-h)$ and $V(x_{n+2}+h)$, respectively.

Finally, in case $X_2$, $E_1-E_2$ and $H-2E_1$ are 
the only prime proper vertical divisors.

The case $d=3$ 
has already been studied in~\cite{hltu} and we refer to that paper for the 
classification of prime proper vertical divisors.

For $d=4$, 
if $D$ is a prime proper vertical
divisor, then $D$ is strictly contained in the support 
of $\pi^*\pi(D)$. Let $\gamma$ be a general fiber of
$\pi$ over a point $q\in\pi(D)$ and let us denote by $C$
the image $\sigma(\gamma)$ in $Y$. Then either
(j) $C$ is an irreducible rational curve or (jj) it contains
lines and/or conics.

In case (j), $C$ is singular
at one of the points $q_i\in\Pi\cap Y$ and
the union of these curves gives a prime
proper divisor having class $H-2E_i-E_j-E_k$.
In order to obtain the class of a fiber we have
to add some prime proper vertical exceptional divisors 
of the form $E_i-E_j$. 

In case (jj), observe that 
by~\cite{cps} through any point of $Y$ there is 
only a  $(n-3)$-dimensional family of lines and 
hence they can not fill up a divisor. Therefore
the curve $C$ must contain a conic through
two points $q_i$ and $q_j$ of $\Pi \cap Y$, possible infinitely 
near. By Proposition~\ref{inf} the line
$\langle q_i,q_j\rangle$ passes through  
one vertex $p_k$ and hence $p_k\in\Pi$. 
In this case the class of one of the irreducible 
components of $\pi^*\pi(D)$ is of the form
$H-2E_i-2E_j$.
For instance, in case $X_{31}$ we can write
$Y=Q_1\cap Q$, where $Q_1$ is a cone
with vertex $p_1\in\Pi$ and $Q$ is a smooth
quadric. Furthermore, $Q$ intersects $\Pi$ along 
a smooth conic $C$ while $Q_1\cap \Pi$ is the 
union of two generatrices and one of them is
tangent to $C$ at $q_1$ while the other one 
intersects $C$ in $q_3$ and $q_4$.
Therefore we have the vertical class $H-2E_1-2E_2$ 
corresponding to the conics through $q_1$ and 
whose tangent line at $q_1$ is the line 
$\langle q_1,p_1\rangle$ and the class $H-2E_3-2E_4$
corresponding to the conics through $q_3$ and $q_4$.
Observe that the sum of these classes gives twice 
a fiber. Moreover, we also have 
the vertical class $E_1-E_2$ sitting inside the exceptional locus
and the class $H-2E_1-E_3-E_4$ which is spanned by 
the union of the strict transforms of the singular rational 
quartic curves of $Y$ obtained intersecting it with
a hyperplane  tangent to $Y$ at $q_1$.

We summarise the above observations in the following

\begin{proposition}
\label{pro:vertical}
Let $\pi\colon X\to \pp^{n-1}$ be a del Pezzo elliptic 
fibration of degree $d\leq 4$ and dimension $n\geq 3$.
Then for each type the sections and the vertical divisors
are as follows:

{\footnotesize 
\begin{longtable}{|c|c|l|l|}
\hline
 Degree & Type & Sections & Proper prime vertical divisors\\[2pt]
\hline
$1$ &
$X_{1}$ &
$E_1$
& \\[2pt]
\hline
$2$ &
$X_{11}$ &
$E_1,E_2$
& \\[2pt]
& 
$X_{SS}$ &
$E_1,E_2$
& $2H-4E_1,2H-4E_2$\\[2pt]
& 
$X_{2}$ &
$E_2$
& $E_1-E_2, H-2E_1$ \\[2pt]
\hline
$3$ &
$X_{111}$ &
$E_1,E_2,E_3$
& \\[2pt]
& 
$X_{S11}$ &
$E_1,E_2,E_3$
& $H-3E_1, 2H-3E_2-3E_3$\\[2pt]
& 
$X_{SSS}$ &
$E_1,E_2,E_3$
& $H-3E_1, H-3E_2, H-3E_3$\\[2pt]
& 
$X_{12}$ &
$E_1,E_3$
& $E_2-E_3, H-E_1-2E_2$\\[2pt]
& 
$X_{S2}$ &
$E_1,E_3$
& $H-3E_1, 2H-3E_2-3E_3, E_2-E_3, H-E_1-2E_2$\\[2pt]
& 
$X_{3}$ &
$E_3$
& $E_1-E_2, E_2-E_3, H-2E_1-E_2$ \\[2pt]
& 
$X_{S}$ &
$E_3$
& $E_1-E_2, E_2-E_3,H-2E_1-E_2,H-3E_1,2H-3E_2-3E_3$\\[2pt]
\hline
$4$ & 
$X_{40}$ &  
$E_1,E_2,E_3,E_4$
&  \\[2pt]
& 
$X_{41}$ & 
$E_1,E_2,E_3,E_4$
& $H-2E_1-2E_2, H-2E_3-2E_4$ \\[2pt]
& 
$X_{42}$ & 
$E_1,E_2,E_3,E_4$
& $H-2E_1-2E_2,H-2E_3-2E_4$\\[2pt]
&&& $H-2E_1-2E_3, H-2E_2-2E_4$  \\[2pt]
& 
$X_{43}$ & 
$E_1,E_2,E_3,E_4$
& $H-2E_i-2E_j,\ 1\leq i<j\leq 4$ \\[2pt]
& 
$X_{30}$ &  
$E_2,E_3,E_4$
& $E_1-E_2, H-2E_1-E_3-E_4$ \\[2pt]
& 
$X_{31}$ & 
$E_2,E_3,E_4$
& $H-2E_1-2E_2,  H-2E_3-2E_4$\\[2pt]
&&& $E_1-E_2, H-2E_1-E_3-E_4$  \\[2pt]
& 
$X_{20}$ &  
$E_3,E_4$
&  $E_1-E_3, H-2E_1-E_2-E_4, E_2-E_4,H-E_1-2E_2-E_3$ \\[2pt]
&&
$E_3,E_4$
& $E_1-E_3, E_3-E_4, H-2E_1-E_2-E_3$  \\[2pt]
& 
$X_{21}$ & 
$E_3,E_4$
& $E_1-E_3, E_2-E_4,H-2E_1-2E_3, H-2E_2-2E_4$\\[2pt]
&&& $H-2E_1-E_2-E_4, H-E_1-2E_2-E_3$ \\[2pt]
& &
$E_3,E_4$
& $E_1-E_3, E_2-E_4, H-2E_1-E_2-E_4, H-E_1-2E_2-E_3$ \\[2pt]
& 
$X_{22}$ & 
$E_3,E_4$
& $H-2E_1-2E_3, H-2E_2-2E_4$\\[2pt]
&&& $E_1-E_3, E_2-E_4, H-2E_1-2E_2$\\[2pt]
& 
$X_{10}$ &  
$E_4$
& $E_1-E_2, E_2-E_3,E_3-E_4, H-2E_1-E_2-E_3$\\[2pt]
& 
$X_{11}$ & 
$E_4$
& $E_1-E_2, E_2-E_3, E_3-E_4, H-2E_1-2E_2$ \\[2pt]
\hline

\caption{Sections and vertical classes of del Pezzo elliptic fibrations with $d\leq 4$.}
\label{sect:verti}
\end{longtable}
}

\end{proposition}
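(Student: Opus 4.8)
The plan is to reduce the whole table to the geometry of the general fibre surface and then run through the list of types from Section~\ref{sec:types}; much of the degree $1$ and degree $2$ content is already in the discussion preceding the statement, and the degree $3$ rows are those of \cite{hltu}, so the real work is degree $4$. Throughout, $\ell\subset\pp^{n-1}$ denotes a general line and $S:=\pi^{-1}(\ell)$ the general fibre surface, a smooth rational elliptic surface on which $\langle A,B\rangle=A|_S\cdot B|_S$ by \eqref{bil} and $F|_S$ is the fibre class.

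First I would settle the sections. A prime divisor $D$ of $X$ is (a class of) a section of $\pi$ precisely when $\langle D,F\rangle=1$, in which case also $\langle D,D\rangle=-1$; and $E_i$ is a $\pp^{n-1}$ mapping isomorphically onto the base, hence such a section, unless one of the later centres $q_j$ is blown up on it. By Remark~\ref{tree} there is then exactly one such $q_j$: $E_i$ becomes reducible, its strict transform is a proper prime vertical divisor of class $E_i-E_j$, and $E_j$ is the section over $\sigma(E_i)$. So the ``Sections'' column is obtained by reading off, type by type, which of the centres $q_2,\dots,q_d$ are infinitely near; when the base locus is $d$ distinct points (e.g.\ $X_1$, and $X_{11}$, $X_{SS}$ in degree two, and $X_{111},X_{40},X_{41},X_{42},X_{43}$) all the $E_i$ are sections, while each infinitely near pair $(q_i,q_j)$ replaces the section $E_i$ by the section $E_j$ and contributes the vertical class $E_i-E_j$, as in $X_2$, $X_{30}$, $X_{20}$, $X_{10}$. (When $\MW(\pi)\neq\langle 0\rangle$ further sections exist, obtained from these via the group law, but they do not appear in the table.)

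Next I would handle the vertical divisors by the same device. If $D$ is a proper prime vertical divisor whose class is not a multiple of $F$, then $\pi(D)\cap\ell$ is finite, so $D|_S$ lies in finitely many fibres of $S\to\ell$; hence $\langle D,F\rangle=0$, i.e.\ $D\in F^\perp$, and $D|_S$ is a sum of components of reducible fibres of $S$. Letting $\ell$ vary, every component of every reducible fibre of $S$ arises this way, so it suffices to enumerate the reducible fibres of the general $S$ and compute their components in $\Pic(X)$, which can be done in $\Pic(S)$. For $d=1$, $F^\perp$ is spanned by $F$, so there is nothing; for $d=2$ one uses the trichotomy~\eqref{fibres:cover} for $\varphi^{-1}(\ell)$, and a proper prime vertical $D$ either sits in the exceptional locus of $\sigma$ (only for $X_2$, class $E_1-E_2$) or has $\varphi(D)$ swept out by bitangents to the branch quartic, giving nothing for $X_{11}$, the classes $2H-4E_1,2H-4E_2$ for $X_{SS}$, and $H-2E_1$ for $X_2$ (together with the hyperplane $V(x_n)$ tangent to $S$ at $p$) — exactly as established above; for $d=3$ the list is \cite{hltu}.

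The heart of the proof is degree $4$, and this is where I expect the main difficulty. Here the fibre of $\pi$ over a point of $\pi(D)$ maps onto the base locus, inside the $\pp^3$ spanned by $\Pi$ and a line of $Y$, of the restriction of the pencil $\Lambda$ to that $\pp^3$, and it is reducible precisely when $\Pi$ contains some vertex $p_k$. The proper prime vertical divisors come from cases (j) and (jj) above: an irreducible rational curve of $Y$ singular at a point $q_i\in\Pi\cap Y$ sweeps out one of class $H-2E_i-E_j-E_k$ (the singular rational quartics cut by a hyperplane tangent to $Y$ at $q_i$), a conic through $q_i,q_j$ with $\langle q_i,q_j\rangle$ through a vertex sweeps out one of class $H-2E_i-2E_j$ by Proposition~\ref{inf}, and infinitely near centres contribute exceptional classes $E_i-E_j$; the precise shape of all these classes (e.g.\ $H-E_1-2E_2-E_3$ rather than $H-2E_1-E_2-E_3$) follows from how $\sigma$ factors. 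One then runs this through the configurations of $k\in\{1,\dots,4\}$ intersection points and $l\in\{0,1,2,3\}$ vertices of $\Pi$ catalogued in Section~\ref{sec:types}, with $X_{31}$ treated above as the model case. Finally I would check completeness: in each reducible fibre the listed components sum to the fibre class $F=H-\sum_iE_i$ (for instance $(H-2E_1-E_3-E_4)+(E_1-E_2)=F$ in $X_{30}$, and $(H-2E_1-2E_2)+(H-2E_3-2E_4)=2F$ whenever $\Pi$ meets a vertex), and by Proposition~\ref{base} any prime vertical divisor $B$ with $\langle B,B\rangle<0$ must be one of the fibre components already listed. The hard part is carrying out the degree $4$ enumeration cleanly: for each of the roughly thirteen subtypes $X_{kl}$ one has to keep track of which vertices lie on the relevant $\pp^3$'s, of the tangency pattern of the conics $Q\cap\Pi$ and $Q'\cap\Pi$, and — when $k<d$ — of how $\sigma$ factors through infinitely near points (using Remark~\ref{tree} that at most one centre sits on each exceptional divisor); once the fibre types are fixed, the computation of the classes in $\Pic(S)$ is routine bookkeeping.
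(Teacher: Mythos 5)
Your proposal is correct and follows essentially the same route as the paper, whose ``proof'' is precisely the discussion preceding the proposition: rank count for $d=1$, the trichotomy~\eqref{fibres:cover} for $d=2$, citation of~\cite{hltu} for $d=3$, and for $d=4$ the dichotomy between irreducible rational curves singular at some $q_i$ (classes $H-2E_i-E_j-E_k$) and conics through two of the $q_i$ handled via Proposition~\ref{inf} and the $(n-3)$-dimensional family of lines from~\cite{cps}, with exceptional classes $E_i-E_j$ and the sections read off from the infinitely near centres. Your explicit reduction to the general fibre surface $S=\pi^{-1}(\ell)$ only makes systematic what the paper uses implicitly, so no comparison beyond this is needed.
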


\begin{proof}[Proof of Theorem~\ref{thm:mw}]
Recall that the Mordell-Weil group
of the elliptic fibration $\pi$ is the
group of rational sections of $\pi$ or, equivalently,
the group of $K=\cc(\pp^{n-1})$-rational
points $X_\eta(K)$ of the generic fiber $X_\eta$ of 
$\pi$ once we choose one of such points 
$O$ as an origin for the group law.
Let $\mathscr{T}$ be the subgroup of
$\Pic(X)$ generated by the classes of the
vertical divisors and by the class of the 
section $O$. There is an exact
sequence~\cite{Wa}*{Section~3.3}:
\begin{equation}\label{mw}
 \xymatrix{
  0\ar[r] &
  \mathscr{T}\ar[r] &
  \Pic(X)\ar[r] &
  X_\eta(K)\ar[r] &
  0.
 }
\end{equation}
In degree $d$, the Picard group of $X$ is free of rank 
$d+1$, generated by the classes $H,E_1,\dots,E_d$.
Observe that if $F$ is defined as in~\eqref{F}, then 
$\langle F,E_d\rangle\subseteq\mathscr T$
holds and by Proposition~\ref{pro:vertical} and the 
sequence~\eqref{mw} we get the statement.
\end{proof}

\section{Cones}
\label{sec:cones}

The aim of this section is to provide a description
of the nef, effective and moving cone of del Pezzo
elliptic varieties. Moreover, we discuss the Mori chamber 
decomposition of the moving cones and we use this 
decomposition in order to prove Theorem~\ref{cox-mw}.

\subsection{The nef cones}
Given a subset $I$ of $\{1,\dots,d\}$, in what
follows we denote
by $F_I$ the divisor $H-\sum_{i\in I}E_i$.

\begin{theorem}
\label{cones}
Let $\pi\colon X\to\pp^{n-1}$ be a del Pezzo
elliptic fibration with $n\geq 3$. 
Then the extremal rays of the nef cone $\Nef(X)$ 
are all the $F_I$ such that $I\subseteq\{1,\dots,d\}$ and
$\langle F_I,V\rangle\geq 0$ for each exceptional vertical 
class $V$.
\end{theorem}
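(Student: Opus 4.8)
The plan is to characterize $\Nef(X)$ via the bilinear form $\langle\,,\,\rangle$ and Proposition~\ref{fimov}. First I would note that every $F_I = H-\sum_{i\in I}E_i$ is a base-point-free class: it is the pullback under $\sigma$ of a linear subsystem of $|H|$ on $Y$ (those elements of $|H|$ passing through the points $\{q_i : i\in I\}$ and, where a $q_j$ is infinitely near, through the corresponding exceptional divisor as dictated by Remark~\ref{tree}). Since $|H|$ is base-point-free on $Y$ and the conditions imposed are linear, $F_I$ is semiample on $X$, hence nef; so each $F_I$ spans a ray of $\Nef(X)$, provided it is actually extremal and nonzero, which forces $\langle F_I, V\rangle\geq 0$ for every exceptional vertical class $V$ (an irreducible curve contracted vertically), since nef classes pair nonnegatively with all effective curves.

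**Key steps.** (1) \emph{Nefness of the candidate rays.} For $I$ with $\langle F_I,V\rangle\ge 0$ for all exceptional vertical $V$, show $F_I$ is nef by restricting to a general surface $S=\pi^{-1}(\ell)$ (the rational elliptic surface of the paper): on $S$ the class $F_I|_S$ pairs nonnegatively with every $(-1)$- and $(-2)$-curve by the hypothesis on exceptional vertical classes together with $\langle F_I, F\rangle = d-|I|\ge 0$ and $\langle F_I, E_i\rangle\ge 0$; since the fibration structure controls the effective cone of $S$, this gives $F_I|_S$ nef, hence $F_I\cdot C\ge 0$ for all curves lying in fibers, and $F_I\cdot(\text{section of }\pi)\ge 0$ is immediate from $F_I = \pi^*(\text{hyperplane})-(\text{stuff on exceptional divisors})$; varying $\ell$ covers all curves, so $F_I\in\Nef(X)$. (2) \emph{These are all the extremal rays.} Take any nef class $N$ and write $N = aH-\sum b_iE_i$. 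The dual description in Proposition~\ref{fimov} says: if $\langle N, B\rangle<0$ for a prime effective $B$ then $B$ is in the stable base locus, contradicting nefness; so $\langle N,B\rangle\ge 0$ for all effective $B$, i.e. $N\in\Mov(X)$, and in particular $N$ pairs nonnegatively with all the $E_i$ and with $F$. Then argue that such $N$ lies in the cone generated by the $F_I$: decompose $N$ greedily, subtracting off $F_I$'s, using that $\langle N, E_i\rangle = b_i\ge 0$ controls the coefficients — each extremal face of the cone $\{\text{nonneg. pairing with all } E_i, F, \text{ and all exceptional vertical }V\}$ is cut out by setting some subset of these pairings to zero, and one checks the vertices of that polyhedral cone are exactly the rays $\mathbb{R}_{\ge0}F_I$ with $I$ admissible.

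**Main obstacle.** The delicate direction is (2), showing no \emph{other} extremal rays appear: a priori $\Nef(X)$ could be cut out by inequalities $\langle\,\cdot\,,V\rangle\ge 0$ involving vertical classes $V$ that are not exceptional (e.g. the non-exceptional vertical divisors like $2H-4E_i$ or $H-2E_i-2E_j$ listed in Proposition~\ref{pro:vertical}). The point to establish is that those extra inequalities are redundant — each non-exceptional vertical prime divisor is a sum of a semiample class $F_I$ and exceptional vertical classes, so nonnegativity against the exceptional vertical classes together with membership in $\Mov(X)$ already forces nonnegativity against all vertical primes. Equivalently, one shows the facets of $\Nef(X)$ are exactly $\langle\,\cdot\,, E_i\rangle = 0$, $\langle\,\cdot\,, F\rangle=0$, and $\langle\,\cdot\,, V\rangle=0$ for exceptional vertical $V$; this is a finite polyhedral computation, type by type using Table~\ref{sect:verti}, verifying in each case that the vertices of the resulting cone are precisely the admissible $F_I$. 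I would expect to organize this as a uniform lemma (the non-exceptional vertical classes are effective combinations of an $F_I$ and exceptional vertical classes), reducing the per-type check to a short verification.
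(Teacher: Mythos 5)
There is a genuine gap in your step (2), and it is exactly at the point you flag as the ``main obstacle''. The inequalities you propose to work with are all of divisorial type: $\langle N,E_i\rangle\geq 0$, $\langle N,F\rangle\geq 0$ and $\langle N,V\rangle\geq 0$ for the exceptional vertical classes $V$. Writing $N=\alpha H-\sum_i m_iE_i$, these say only $m_i\geq 0$, $d\alpha\geq\sum_i m_i$ and $m_i\geq m_j$ for infinitely near points; they do \emph{not} cut out the cone generated by the $F_I$, and your claim that ``the vertices of that polyhedral cone are exactly the rays $\mathbb{R}_{\geq 0}F_I$'' is false. Concretely, take degree $d=2$, type $X_{11}$, and $N=H-2E_1$: then $\langle N,E_i\rangle\geq 0$, $\langle N,F\rangle=0$, and there are no exceptional vertical classes, yet $N$ is not nef (it is negative on the strict transform of a line of $Y$ through $q_1$, of class $h-e_1$) and it is not a nonnegative combination of the $F_I$, since any such combination has each $E_i$-coefficient bounded by the $H$-coefficient. (In degree four the analogous class $H-2E_i$ is precisely the one the paper shows becomes nef only after the flop of $h-e_i$; it lies in $\Mov(X)$ but not in $\Nef(X)$.) The missing constraints are of \emph{curve} type, namely $N\cdot(h-e_i)=\alpha-m_i\geq 0$ for the lines of $Y$ through the blown-up points; these cannot be recovered from pairing with effective divisors via the bilinear form, so neither your greedy decomposition nor the per-type facet check can close the argument as stated. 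The paper's proof supplies exactly this ingredient: it forms the subcone $\mathcal C$ of the Mori cone generated by $e_i$, by the exceptional curve classes $e_i-e_j$, \emph{and} by the classes $h-e_i$, and then writes any $D=\alpha H-\sum m_iE_i$ in $\mathcal C^\vee$ explicitly as $(\alpha-\mu_r)H+\sum_i(\mu_i-\mu_{i-1})F_{I_i}$ with nested admissible sets $I_i$, which is the decomposition your ``greedy'' step would need but cannot obtain from your inequalities.

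Two smaller points. Your opening assertion that \emph{every} $F_I$ is base-point free is false without the admissibility condition (if $q_j$ is infinitely near $q_i$ and $j\in I$, $i\notin I$, then $\langle F_I,E_i-E_j\rangle=-1$ and $F_I$ is not even nef); the semiampleness argument only works for admissible $I$, where imposing passage through $q_j$ forces passage through $q_i$. Also, in step (1) the assertion that ``varying $\ell$ covers all curves'' fails for $n\geq 4$, since a curve of $X$ need not map into a line of $\pp^{n-1}$; and in step (2) the detour through Proposition~\ref{fimov} is both unnecessary and shaky (a nef class can a priori have nonempty stable base locus): nefness of $N$ gives $\langle N,B\rangle=F^{n-2}\cdot N\cdot B\geq 0$ for effective $B$ directly, because $F$ and $N$ are nef. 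These are repairable, but the missing curve-class inequalities $\alpha\geq m_i$ are an essential idea of the paper's proof that your proposal does not contain.
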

\begin{proof}
Let us consider the subcone $\mathcal C$ 
of the Mori cone of $X$ generated by the following
classes: 
\begin{itemize}
\item $e_i$ such that $E_i$ is a section;
\item $e_i-e_j$ such that $E_i-E_j$ is a prime vertical
divisor;
\item $h-e_i$ for each $q_i\in Y$. 
\end{itemize}
Let $D:=\alpha H-\sum m_iE_i$ be a class in 
the dual $\mathcal C^*$. Then we have the following
inequalities: 
$m_i\geq 0\ \forall i$, $m_i\geq m_j$ if the
point $q_j$ lies on the exceptional divisor of
the blowing-up at $q_i$ and finally $\alpha\geq
m_i\ \forall i$. Let us write $\{m_1,\dots,m_d\}=
\{\mu_1,\dots,\mu_r\}$, where $r\leq d$ and
$0=\mu_0 \leq \mu_1 < \dots < \mu_r$,
and let us denote
by $I_i:=\{j\mid m_j\geq \mu_i\}$, for each $i=1,\dots,r$. 
Then we can write
\[
D=(\alpha-\mu_r)H+\sum_{i=1}^r(\mu_i-\mu_{i-1})F_{I_i},
\]
where the $F_{I_i}$ are
nef and their product with any effective $E_j-E_k$ is
non negative. In order to conclude the proof we need to
show that these $F_I$ are extremal rays of the nef cone.

Let us first suppose that $X$ is obtained by blowing up 
$d$ distinct points on $Y$. In this case, we have to consider
all the $F_I$ as $I$ varies in the subsets of $\{1,\dots,d\}$, 
and by induction on $d$ it can be proved that they
are vertices of a $d$-dimensional hyper-cube. In particular,
they are extremal rays of the cone they generate.

In addition, we can also infer that 
no $F_I$ lies in the convex hull of the remaining and hence the general
case follows.
\end{proof}

\subsection{The effective and moving cones}
We now restrict our attention to del Pezzo elliptic
fibrations of degree $d\leq 4$ and having
finite Mordell-Weil group, proving Theorem~\ref{eff-mov-cones}.

\begin{proof}[Proof of Theorem~\ref{eff-mov-cones}]
Let us consider, for each 
del Pezzo elliptic variety $X$ the cone $\mathcal M$ of $\Pic_\qq(X)$
generated by the vertical classes and the sections of $\pi$. 
Let $\varrho_1,\dots,\varrho_n$ be the
extremal rays of $\mathcal M$.
We have the following inclusions
\begin{equation}\label{inclusions}
 \bigcap_{i=1}^n\,
 {\rm cone}(\varrho_1,\dots,\stackrel{\vee}{\varrho_i},\dots,\varrho_n)
\subseteq
\Mov(X)\subseteq\Eff(X)^\vee\subseteq\mathcal M^\vee,
\end{equation}
where the first inclusion is due to the fact that each
$\rho_i$ is generated by a prime divisor,
the second one is a consequence of  Proposition~\ref{fimov}
and the last one follows from $\mathcal M\subseteq\Eff(X)$.
The proof goes as follows. If the degree $d$ is at most three, then
the Cox ring is known (Theorem~\ref{cox:12}
for degree one or two and~\cite{hltu} for degree three)
and a direct computation shows
that the rays of $\mathcal M^\vee$ are movable. 
When $d=4$, 
observe that if $X$ is of type $X_{43}, X_{22}, X_{21}$, then the first cone and the last one
in~\eqref{inclusions} are equal and the two assertions
of the theorem follow.
In the remaining cases, we are going to check that
the rays of $\mathcal M^\vee$ are movable.

If $X$ is of type $X_{11}$, then the only class we have to check is 
$H-2E_1$ (all the other rays of $\mathcal M^\vee$ are
indeed nef classes). We are going to see that the base locus
of the linear system $|H-2E_1|$ has codimension two.
Indeed, this linear system corresponds on $Y$ to the
linear system of hyperplane sections
containing the tangent space
at $q_1$, whose base locus is the 
union of the lines passing through
$q_1$. When we blow up $q_1$, the strict
transforms of these lines intersect $E_1$
along a subvariety of codimension two. Observe 
that the second point $q_2$ that we blow up
do not lie on this subvariety, since
otherwise the plane $\Pi$ would intersect
$Y$ along a line. Then the base locus of
$|H-2E_1|$ can not be divisorial.

In case $X_{10}$ the only classes we have to check
are $H-2E_1$ and $3H-4E_1-4E_2$. The first one
can be done as in case $X_{11}$ while the
second one can be obtained as
the image of $H$ 
via the Geiser involution described in Subsection~\ref{flop}, 
and hence it is movable.
\end{proof}
As a consequence of Theorem~\ref{eff-mov-cones}, 
if $X$ is a del Pezzo elliptic variety of degree $d\leq 4$ 
such that the Mordell-Weil group of $\pi\colon X\to\pp^{n-1}$
is finite, then the effective cone $\Eff(X)$ can be read from 
Table~\ref{sect:verti}.
The graphs of the quadratic form on the
primitive generators of the extremal rays 
of $\Eff(X)$ are listed in Table~\ref{tab:int}.

\vspace{5mm}

Let us consider an example in which the 
Mordell-Weil group of the fibration is not finite
and the moving cone is the union of infinitely many 
chambers.

When the elliptic fibration has degree $d=2$ and type $X_{11}$, 
we have seen that the Mordell-Weil group is $\langle \sigma\rangle
\cong \zz$. The action of $\sigma$ on the Picard 
group of $X$, with respect to the basis 
$B:=(H-E_1-E_2,\, E_2-E_1,\, E_1)$,
is given by the following matrix
\[
 \begin{pmatrix}
  1&2&0\\
  0&1&1\\
  0&0&1
 \end{pmatrix}.
\]

The cone $\sigma^k(\Nef(X))$ is generated by the 
classes corresponding to the columns of the matrix
\[
 \begin{pmatrix}
  1& k^2+k+1 & k^2-k+1 & 2k^2+1\\
  0& k+1&k&2k+1\\
  0&1&1&2
 \end{pmatrix},
\]
with respect to the basis $B$. 
We claim that the classes $\sigma^k(H)$ 
are extremal rays of the moving cone and generate it,
so that the following equality holds
\begin{wrapfigure}{r}{0.3\textwidth}
\includegraphics[width=0.42\textwidth]{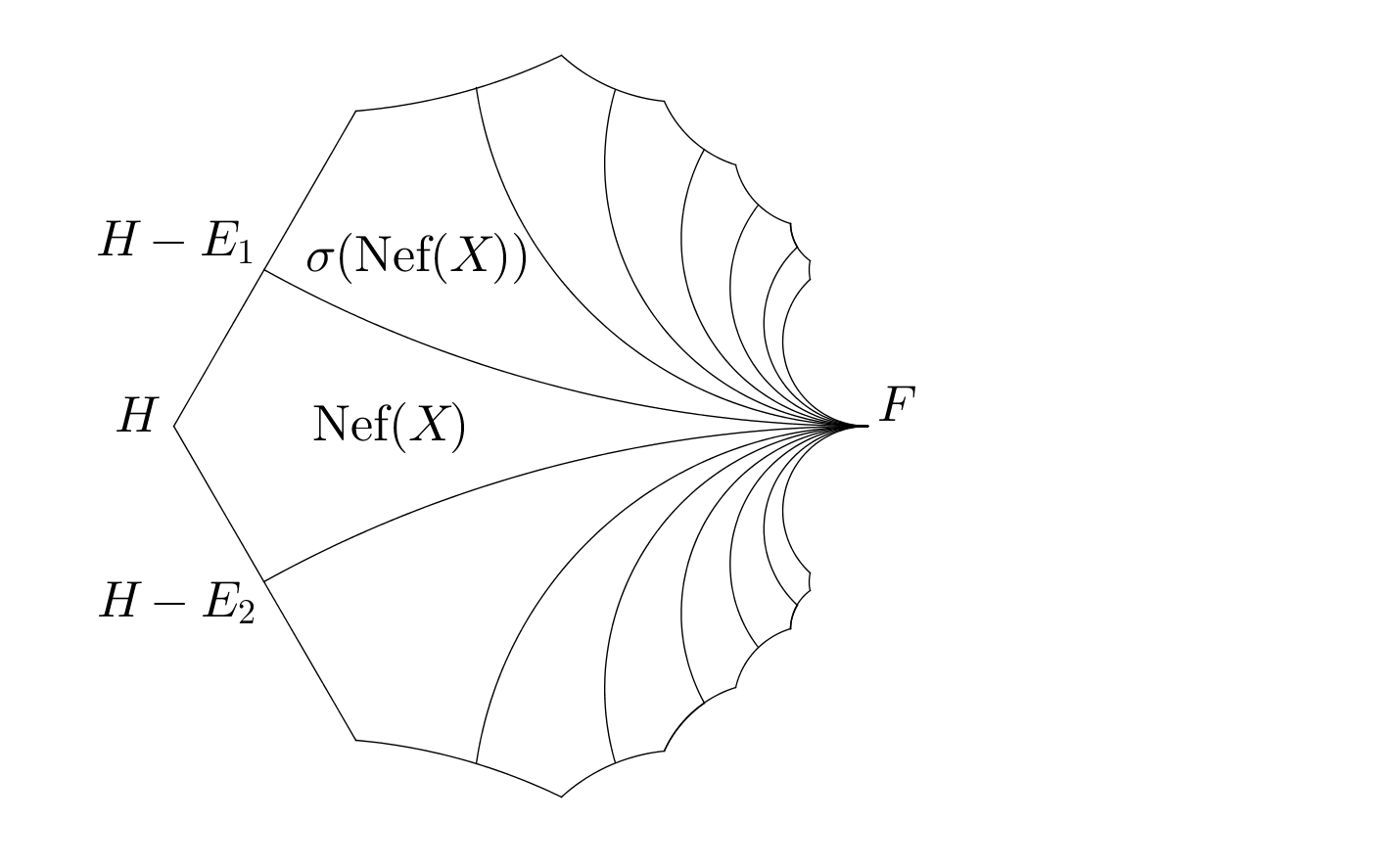}
\end{wrapfigure}
\[
 \Mov(X)
 =
 \bigcup_{k\in\zz}\sigma^k(\Nef(X)).
\]
First of all, observe that for each
$k\in\zz$ the cones $\sigma^k(\Nef(X))$ and 
$\sigma^{k+1}(\Nef(X))$ share the two-dimensional face 
generated by $F$ and $\sigma^k(H-E_1)=\sigma^{k+1}(H-E_2)$.
Moreover $\sigma^k(H)+\sigma^{k+1}(H)=
4\sigma^k(H-E_1)$, so that the union of the cones 
$\sigma^k(\Nef(X))$ is a convex cone and the classes 
$\sigma^k(H-E_i)$, $i=1,2$, are on its boundary 
but they are not extremal rays. Now observe that the right hand side cone is contained in $\Mov(X)$. 

Finally, since the property of lying on the boundary 
of $\Mov(X)$ is preserved by $\sigma^k$, 
we only have to prove that the two faces $\langle H,H-E_i\rangle$, 
for $i=1,2$, are on the boundary of the moving cone $\Mov(X)$.
We conclude observing that if we move outside from $\Nef(X)$ 
along a direction orthogonal to the face $\langle H,H-E_1\rangle$ 
(respectively $\langle H,H-E_2\rangle$) we obtain classes containing 
$E_2$ (respectively $E_1$) in the stable base locus.

\subsection{Generalized Bertini and Geiser involutions}
\label{flop}
We consider here a generalization of the classical
Bertini and Geiser involutions to blow-ups of del
Pezzo varieties. Let $Y$ be a degree $d\geq 3$ del Pezzo
variety and let $Z\subseteq Y$ be a zero-dimensional 
subscheme such that $\dim\langle Z\rangle=l(Z)-1$ and
the intersection of $d-1$ general elements of 
$\mathcal L_Z:=|\Osh_Y(1)\otimes\mathcal I_Z|$ 
is an elliptic curve. We denote by $\sigma\colon Y_Z\to Y$ 
the blow-up of $Y$ along $Z$ as in Section~\ref{sec:setup}.

If $Z$ has length $l(Z)=d-2$, then the general $(d-2)$-dimensional 
linear space containing $Z$ intersects $Y\setminus Z$ at two
distinct points. The birational involution obtained
by exchanging these two points induces
a birational involution $\sigma_G$ on
the blow-up $Y_Z$ of $Y$ at $Z$. We call 
this $\sigma_G$ a {\em generalized Geiser involution}.

When $Z$ has length $l(Z)=d-1$, denote by $F$ the 
divisor on $Y_Z$ defined as before. 
The base locus of the linear system $|F|$ consists of
one point $q$ while $|2F|$ defines a morphism 
$\varphi$. Since $F^n=1$, we have that $F^{n-1}$ 
is rationally equivalent to an elliptic curve $C$ passing 
through $q$ and the restriction $\varphi_{|C}$ is a double 
covering of a line passing through the point $p:=\varphi(q)$. 
Hence the image $\varphi(Y_Z)$ is a cone $V$. 
If we denote by $E$ the exceptional divisor corresponding
to the last blow-up of $\sigma$, we have that the restriction
$\varphi_{|E}$ is the $2$-veronese embedding $v_2$ of $\pp^{n-1}$.
We conclude that $V$ is a cone over $v_2(\pp^{n-1})$ and
$\varphi$ induces a birational involution $\sigma_B$ on $Y_Z$ that
we call a {\em generalized Bertini involution}.
We remark that if $X$ is  the del Pezzo elliptic variety 
obtained by blowing up $Y_Z$ in $q$, then 
$\sigma_B$ induces on $X$ the hyperelliptic involution
with respect to the origin given by the exceptional divisor.

\begin{remark}\label{geiser}
If $Y$ has degree four and the line $\langle Z\rangle$
does not contain any vertex $p_i$, then 
the indeterminacy locus of the corresponding 
Geiser involution $\sigma_G$ has codimension two. 
Moreover, it lifts to an isomorphism in codimension
one for the elliptic varieties of type 
$X_{21}$ and $X_{10}$. The action on the 
Picard group of $X$ in each case
is given by the following matrices respectively

{\small
\[
\sigma_{21}
=
\left(
\begin{array}{rrrrr}
3 & 1 & 1 & 0 & 0\\
-4 & -2 & -1 & 0 & 0\\
-4 & -1 & -2 & 0 & 0\\
0 & 0 & 0 & 0 & 1\\
0 & 0 & 0 & 1 & 0
\end{array}
\right)
\qquad
\sigma_{10}
=
\left(
\begin{array}{rrrrr}
3 & 1 & 1 & 0 & 0\\
-4 & -1 & -2 & 0 & 0\\
-4 & -2 & -1 & 0 & 0\\
0 & 0 & 0 & 1 & 0\\
0 & 0 & 0 & 0 & 1
\end{array}
\right).
\]
}
To prove this, we first claim that the lifted
birational map preserves the elliptic fibration
$\pi$ and thus it is a flop. Indeed, if $f$ is a fibre
of $\pi$ whose image $C$ in $Y$ is cut out by 
a three-dimensional linear space $L$ and we
fix a point $y\in C$, then 
the plane spanned by $y$ and $\langle Z\rangle$ 
is contained in $L$ and thus it intersects $C$ at a
fourth point, so that $\phi(f) = f$, which proves the claim.
Since $\phi$ preserves the fibration $\pi$, 
its pull-back $\phi^*$ must preserve both
the sets of horizontal and vertical divisors
of $X$. A direct calculation shows that the 
representative matrix for $\phi^*$ in the basis
$(H,E_1,\dots,E_4)$ is one of the above
in each case.
\end{remark}

\subsection{Mori chambers}
Let $X$ be a del Pezzo elliptic variety of degree four
with finite Mordell-Weil group. We provide here
the Mori chamber decomposition of the moving
cone $\Mov(X)$ of $X$. 
In the following proposition, we will denote by
$N$ the nef cone of $X$ and by
\[
 N_i
 :=
 {\rm cone}(\{F_I\, :\, i\in I\text{ and }F_I\in N\}
 \cup \{H - 2E_i\}).
\]

\begin{proposition}
\label{mc}
Let $X$ be a del Pezzo elliptic variety of degree four
such that the corresponding elliptic fibration has finite 
Mordell-Weil group. Then the Mori chamber decomposition 
of $\Mov(X)$ 
is given in the following table.
\begin{longtable}{|c|l|}
\hline
 Type & Cones\\
 \hline
$X_{43}$ & $N$, $N_1$, $N_2$, $N_3$, $N_4$\\ 
 \hline
$X_{22}$ & $N$, $N_1$, $N_2$\\ 
 \hline
$X_{21}$ & $N$, $N_1$, $N_2$, $\sigma_{21}^*(N)$, 
$\sigma_{21}^*(N_1)$, $\sigma_{21}^*(N_2)$\\ 
 \hline
$X_{11}$ & $N$, $N_1$\\ 
 \hline
$X_{10}$ & $N$, $N_1$, $\sigma_{10}^*(N)$, 
$\sigma_{10}^*(N_1)$\\
\hline
\end{longtable}
\end{proposition}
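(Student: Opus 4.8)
\textbf{Proof plan for Proposition~\ref{mc}.}
The plan is to show, for each of the five types, that the listed cones $N$, $N_i$ (and their $\sigma^*$-translates where relevant) are precisely the Mori chambers of $\Mov(X)$. By Theorem~\ref{eff-mov-cones} we already know $\Mov(X)$ as a cone: it is $\Eff(X)^\vee$ with respect to $\langle\,,\,\rangle$, and $\Eff(X)$ is generated by the sections and vertical classes listed in Table~\ref{sect:verti}. The nef cone $N$ is a chamber by Theorem~\ref{cones}. The first step is therefore to identify, for each type, the codimension-one faces of $N$ that lie in the \emph{interior} of $\Mov(X)$ (as opposed to on $\partial\Eff(X)^\vee$): crossing such a face corresponds to a small $\qq$-factorial modification of $X$, and I must describe the chamber on the other side. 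The candidate walls are exactly the faces of $N$ spanned by those $F_I$ that fail $\langle F_I, H-2E_i\rangle$-type positivity for one of the exceptional vertical classes of the form $E_i-E_j$; equivalently, the face shared with $N_i$.

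Next I would analyze the birational modification attached to each wall. For the walls separating $N$ from $N_i$, the relevant operation is the blow-down/blow-up exchanging the exceptional divisor $E_i$ with the strict transform of the tangent-hyperplane divisor: concretely, on $Y$ this is the projection from the tangent space at $q_i$, and on $X$ the class $H-2E_i$ becomes a movable but non-nef class whose linear system has codimension-two base locus (this is exactly the computation carried out in the proof of Theorem~\ref{eff-mov-cones} for $X_{11}$ and $X_{10}$). So $N_i$ is the nef cone of the flopped model $X_i$, obtained by flopping the curves contracted on the $E_i$-wall; since $H-2E_i$ is movable, $N_i$ is a genuine (full-dimensional) chamber, and the chambers $N$ and $N_i$ together with the walls between them tile a neighbourhood of $N$ inside $\Mov(X)$. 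For types $X_{21}$ and $X_{10}$ there is in addition the generalized Geiser involution $\sigma_G$ of Remark~\ref{geiser}, which is an isomorphism in codimension one and hence permutes the Mori chambers; applying $\sigma_{21}^*$ (resp. $\sigma_{10}^*$) to $N$ and $N_1$ produces the remaining chambers $\sigma_{21}^*(N), \sigma_{21}^*(N_1)$ (resp. $\sigma_{10}^*(N), \sigma_{10}^*(N_1)$). One checks $\sigma_G^*(N_2)=N_2$ in the $X_{21}$ case using the explicit matrix, accounting for why only six chambers appear there and not eight.

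The bookkeeping step is to verify that the listed cones actually \emph{exhaust} $\Mov(X)$, i.e. their union is all of $\Eff(X)^\vee$ and they have disjoint interiors. This is a finite polyhedral computation in each case: take the explicit generators of $\Mov(X)$ from Table~\ref{sect:verti} (dualizing the effective generators via the matrix $\mathrm{diag}(d,-1,\dots,-1)$ of $\langle\,,\,\rangle$), list the generators of each $N_i$ (the $F_I$ with $i\in I$ together with $H-2E_i$), and check that the cones fit together along common facets with no overlap and no gap. For $X_{43}$ the four walls $H-2E_i-2E_j$-type faces of $N$ organize the five chambers around $N$; for $X_{22}$ and $X_{11}$ there are correspondingly fewer; for $X_{21}$ and $X_{10}$ the $\sigma_G$-translates close up the decomposition. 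The main obstacle I anticipate is precisely this last global matching in the larger cases ($X_{43}$, $X_{21}$): one must be sure that after flopping all the $E_i$-walls one does not uncover \emph{further} walls inside $\Mov(X)$ — this follows because each $N_i$ is already bounded by faces of $\Eff(X)^\vee$ (the base locus of classes just outside $N_i$ contains some $E_j$ in its stable base locus, as in the $X_{11}$ boundary argument), so no new small modification is possible, and the decomposition is complete.
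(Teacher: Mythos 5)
Your overall strategy is the same as the paper's (cross the walls of $N$ interior to $\Mov(X)$ by flops to get models with nef cone $N_i$, use the Geiser involutions $\sigma_{21},\sigma_{10}$ to produce the remaining chambers, then check the listed cones cover $\Mov(X)$), but there is a genuine gap at the central step: you never prove that $N_i$ actually \emph{is} the nef cone of the flopped model $X_i$, i.e.\ that it is a single Mori chamber. The statement ``since $H-2E_i$ is movable, $N_i$ is a genuine (full-dimensional) chamber'' is a non sequitur: movability only places $H-2E_i$ in $\Mov(X)$, and a priori the chamber adjacent to $N$ across the facet $N\cap N_i$ could be strictly smaller than $N_i$, with further walls in the \emph{interior} of $N_i$; your closing argument only inspects the facets of $N_i$ and so does not exclude this. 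The paper closes exactly this gap with two nontrivial inputs: (i) the inclusion $N_i\subseteq\Nef(X_i)$, where nefness of $H-2E_i$ after the flop is deduced from Lemma~\ref{cox:pt} (the Cox ring of the one-point blow-up $\widetilde Y$), which yields $\Mov(\widetilde Y)={\rm cone}(H,H-E_i)\cup{\rm cone}(H-E_i,H-2E_i)$; and (ii) the reverse inclusion, obtained by exhibiting effective curves on $X_i$ whose classes generate the dual cone of $N_i$, using the flop formula $[\Gamma']=[\Gamma]+k[C]$ of \cite{cps}*{Lemma~4.1} together with a geometric argument about conics through $q_1,q_i$ and their tangent lines. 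There is also a circularity concern in your framing: speaking of ``the Mori chamber decomposition'' as a finite polyhedral fan and of wall-crossings as small $\qq$-factorial modifications presupposes the finite generation that Theorem~\ref{cox-mw} is meant to establish; the paper instead verifies the Hu--Keel hypotheses directly by producing finitely many explicit SQMs (the flops of the classes $h-e_i$, whose existence is quoted from \cite{cps}) and computing their nef cones.

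Two further inaccuracies are worth flagging. The modification across the wall $N\cap N_i$ is not a ``blow-down/blow-up exchanging $E_i$ with the strict transform of the tangent-hyperplane divisor'': it is a small modification, namely the flop of the strict transforms of the lines of $Y$ through $q_i$ (no divisor is contracted), which is why it stays within the same Picard group and the same moving cone. And your parenthetical claim that $\sigma_{21}^*(N_2)=N_2$ (to explain ``six chambers, not eight'') contradicts the statement you are proving: the table lists $\sigma_{21}^*(N_2)$ as a chamber distinct from $N,N_1,N_2,\sigma_{21}^*(N),\sigma_{21}^*(N_1)$, and indeed the explicit matrix of Remark~\ref{geiser} sends $H-2E_2$ to a class of the form $H-2E_1$, so the translate is a new cone; the count is six simply because only the two walls of $N$ corresponding to $N_1$ and $N_2$ are interior to $\Mov(X)$ for this type.
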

\begin{proof}
Let $X\to X_i$ be the flop of the class $h-e_i$ of the 
strict transform $C$ of a line through the point $q_i\in Y$.
Note that such a flop exists by~\cite{cps}.
We show that the nef cone of $X_i$ is $N_i$ and
then observe that the union of the cones in the table 
given in the statement is $\Mov(X)$ for each type.
To prove the claim, we begin by showing that the
primitive generators of the extremal rays of the
cone $N_i$ are nef in $X_i$. Observe that
each $F_I\in N_i$ is nef in both $X$ and $X_i$ since
$F_I\cdot (h-e_i) = 0$ by our definition of $N_i$. 
Hence we only have to check that also 
$H-2E_i$ is nef in $X_i$.
Since $H-2E_i$ is the pull-back of a class on 
the blow-up $\widetilde Y$ of $Y$ at $q_i$, it is enough to 
prove the claim on $\widetilde Y$.
By Lemma~\ref{cox:pt} the Cox ring of $\widetilde Y$
is finitely generated and the moving cone decomposes
as follows:
\[
 \Mov(\widetilde Y)
 =
 {\rm cone}(H,H-E_i) \cup {\rm cone}(H-E_i,H-2E_i).
\]
Thus, after flopping $h-e_i$ the class $H-2E_i$ becomes 
nef as claimed so that we have the inclusion $N_i\subseteq\Nef(X_i)$.
To prove that this is indeed an equality, we show that
the extremal rays of the dual cone of $N_i$ are classes of effective curves
of $X_i$. To this aim we make use of~\cite{cps}*{Lemma~4.1}
which asserts that if $\Gamma$ is a curve of $X$
which meets $C$ transversally at $k$ points, and no
other effective curve of class $h-e_i$, then the flop
image $\Gamma'$ of $\Gamma$ has class
\begin{equation}
\label{equ:flop}
 [\Gamma'] = [\Gamma] + k[C].
\end{equation}
By a direct calculation, we see that the extremal rays of the dual
cone of $N_i$ are the following (here we list only the
case $i=1$, being the remaining cases analogous):
{\tiny
\begin{longtable}{|c|l|l|}
\hline
& & \\
 Type & Extremal rays of the Mori cone 
 & Extremal rays of the dual cone of $N_1$ \\[2pt]
& &\\
\hline
&&\\
$X_{43}$ & 
$e_1,\,e_2,\,e_3,\,e_4$ & 
$-h+e_1,\,e_2,\,e_3,\,e_4,$\\
&
$h-e_1,\,h-e_2,\,h-e_3,\,h-e_4$ 
&
$2h-e_1-e_2,\,2h-e_1-e_3,\,2h-e_1-e_4$ \\
&&\\
\hline
&&\\
$X_{22},\, X_{21}$ &
$e_2,\,e_4$ &
$-h+e_1, e_2,\,e_4$\\
&
$h-e_1,\,h-e_3$ 
&
$2h-e_1-e_2,\ 2h-e_1-e_3 $\\
&
$e_1-e_2,\,e_3-e_4$
&
$e_3-e_4$
\\
&&\\
\hline
&&\\
$X_{11},\ X_{10}$ & $h-e_1,\ e_4,\ e_1-e_2$ & $-h+e_1,\ e_4, e_2-e_3,$ \\
& $e_2-e_3,\,e_3-e_4$ & $e_3-e_4,\ 2h-e_1-e_2$\\
& & \\
\hline
\end{longtable} }

For each type the curves having class $e_i$
or $e_i-e_{i+1}$, with $i>1$, do not intersect any curve of class 
$h-e_1$ and hence by~\eqref{equ:flop} their classes
in the Mori cone of $X_1$ are the same.
Assume that $\Gamma$ is
an irreducible curve such that
\[
 [\Gamma] = 2h-e_1-e_i.
\]
We can assume that $\Gamma$ is the
strict transform of a smooth conic $\mathcal C$
of $Y$
passing through $q_1$ and $q_i$, which
is possibly infinitely near to $q_1$.
The tangent line to $\mathcal C$ at $q_1$ 
cannot be contained in $Y$ since otherwise the 
plane spanned by $\mathcal C$ and this line
would be contained into each quadric of the 
pencil and thus in $Y$. This gives a contradiction, 
since the line through $q_1$ and $q_i$ is not 
contained in $Y$ by hypothesis.
Thus we conclude again by~\eqref{equ:flop},
proving the assertion for $X_{43}, X_{22}$ and $X_{11}$.

In case $X=X_{21}$ the chamber $\sigma_{21}^*(N)$
is the pull-back of the nef cone $N=\Nef(X)$ via 
the flop $\sigma_{21}$. Since $\sigma_{21}$ is
the generator of the Mordell-Weil group of $\pi$,
we deduce that $\sigma_{21}(X)$ is
an elliptic del Pezzo variety of the same type.
Thus each chamber $\sigma_{21}^*(N_i)$, for
$i=1,2$, is a flop image of $\sigma_{21}^*(N)$
exactly as $N_i$ is a flop image of $N$.
In particular the chamber $\sigma_{21}^*(N_i)$
is generated by finitely many semiample
classes of $\sigma_{21}(X_i)$.

Finally, in case $X=X_{10}$ we proceed as we did
for $X_{21}$, considering $\sigma_{10}$ instead
of $\sigma_{21}$.

\end{proof}

\begin{proof}[Proof of Theorem~$\ref{cox-mw}$]
By~\cite{hltu}*{Lemma~3.5} (1) implies (2), so let us suppose
that the Mordell-Weil group of $\pi$ is finite. If $d=1$ or $2$, then
we conclude by means of Theorem~\ref{cox:12}, while
the case $d=3$ has been proved in~\cite{hltu}*{Theorem~3.6}. 
Finally, when $d=4$, we observe that by Proposition~\ref{mc}, if 
the Mordell-Weil group of the fibration is finite, then 
the moving cone $\Mov(X)$ satisfies all the hypotheses of~\cite{HuKe}.

\end{proof}
\section{Cox rings}
\label{sec:cr}

In this section, we provide a presentation for
the Cox rings of the elliptic del Pezzo varieties
of degree $\leq 4$. We recall 
that given a normal projective variety $X$
with finitely generated picard group, its  
Cox ring $\R(X)$ can be defined as
(see~\cite{ADHL})
\[
\R(X)=
\bigoplus_{[D]\in \Pic(X)}
H^0(X,\Osh_X(D)).
\]
We apply~\cite{hkl}*{Algorithm 5.4} and we will explain all the steps
in the algorithm for the convenience of the reader.
Let $Y_1$ be a smooth projective variety with
finitely generated Cox ring $R_1$, which admits
a presentation $R_1 = \cc[T_1,\dots,T_{r_1}]/I_1$.
Note that $R_1$ is $K_1$-graded, where $K_1
=\Cl(Y_1)$.
Define $\overline Y_1 = \Spec(R_1)$ and let $\widehat Y_1
\subseteq\overline Y_1$ be the characteristic 
space of $Y_1$ with characteristic map 
$p\colon \widehat Y_1\to Y_1$. Let
$q\in Y_1$ be the point that we want to blow up.
We have the following commutative diagram:
\[
 \xymatrix{
  \overline{p^{-1}(q)}\ar[r] & \overline Y_1\\
  p^{-1}(q)\ar[r]\ar[d]^-p\ar[u] & \widehat Y_1\ar[d]^-p\ar[u]\\
  q\ar[r] & Y_1 .
}
\]
Let $I\subseteq R_1$ be the ideal of the closure
of $p^{-1}(q)$ in $\overline Y_1$, and let $J\subseteq R_1$ 
be the irrelevant ideal, i.e. the ideal of 
$\overline Y_1\setminus\widehat Y_1$.
We choose a system of homogeneous generators
$f_1,\dots,f_s\in R_1$ which form a basis for the ideal 
$I$ and such that
\begin{equation}
\label{step-1}
 f_i\in (I^{d_i}\,\colon\, J^\infty),
 \quad \forall i=1,\dots,s,
\end{equation}
where $d_i$ is a positive integer. An ample
class $[D]$ of $Y_1$ defines an embedding 
$Y_1\to Z_1$ into a projective toric variety
$Z_1$, whose Cox ring is the $K_1$-graded 
polynomial ring $\cc[T_1,\dots,T_{r_1}]$ and 
such that the class $[D]$ is ample on  $Z_1$. 
We embed $Z_1$ into another toric variety 
$W_1$ via the following map
\[
 (T_1,\dots,T_{r_1})
 \mapsto
 (T_1,\dots,T_{r_1},f_1,\dots,f_s),
\]
where the Cox ring of $W_1$ is the $K_1$-graded 
polynomial ring $\cc[T_1,\dots,T_{r_1+s}]$,
with $\deg(T_{r_1+i}) = \deg(f_i)$ for any $i$,
and again $[D]$ is an ample class of $W_1$.
Now we blow-up $W_1$ equivariantly along 
the orbit $V(T_{r_1+1},\dots,T_{r_1+s})$,
obtaining the toric variety $Z_2$ whose
Cox ring is the polynomial ring 
$\cc[T_1,\dots,T_{r_2}]$, where $r_2 =
r_1+s+1$, graded by the group
$K_2 := K_1 \oplus \zz$. 
Let $Y_2'\subseteq Z_2$  be the strict 
transform of the variety $Y_1$ as shown
in the following diagram
\[
 \xymatrix{
  Y_2'\ar[d]\ar[rr] && Z_2\ar[d]\\
  Y_1\ar[r] & Z_1\ar[r] & W_1.
}  
\]
Observe that $Y_2'$ is a blow-up 
(possibly weighted) of $Y_1$
at $q$, whose defining ideal 
is the following saturation
\begin{equation}
\label{step-2}
 I_2
 =
 \left(\langle T_{r_1+i}T_{r_2}^{d_i} - f_i
 \,\colon\,
 1\leq i\leq s\rangle
 +
 I_1\right)
 \,
 \colon
 \,
 \langle T_{r_2}\rangle
\end{equation}
with respect to the variable $T_{r_2}$.
Let $Y_2$ be the classical blow-up of
$Y_1$ at $q$.  To conclude that $Y_2' = Y_2$ 
and that $\cc[T_1,\dots,T_{r_2}]/I_2$ is isomorphic
to the Cox ring of $Y_2$, we need to check the
following inequality
\begin{equation}
\label{step-3}
 \dim I_2+\langle T_{r_2}\rangle
 >
 \dim I_2+\langle T_{r_2},T_\nu\rangle,
\end{equation}
where $T_\nu$ is the product of all the $T_i$'s,
for $1\leq i\leq r_1$, such that $V(T_i)$ does
not vanish identically at $p^{-1}(q)$.

\subsection{Degree one and two}
In this subsection, we provide a presentation
for the Cox rings of the del Pezzo elliptic varieties
of degree at most two with finite Mordell-Weil
group. Our main result is the following.

\begin{theorem}
\label{cox:12}
Let $\pi\colon X\to\pp^{n-1}$ be a del Pezzo elliptic 
fibration of degree $d\leq 2$ having finite Mordell-Weil
group.
Then the Cox ring of $X$ and its grading
matrix are listed in the following table:

{\footnotesize
\begin{longtable}{|c|l|c|}
\hline
& & \\
Type  & Cox ring  & Grading matrix\\
& &  \\
\hline
& & \\
$X_{1}$ 
&
$
\frac{\cc[T_1,\dots,T_{n+2},S]}
{\langle T_1^2-T_2^3+T_2\tilde f_4S^4 +\tilde f_6S^6\rangle}
$
&
$
\left[
 \begin{array}{rrrrrr}
  3&2&1&\dots&1&0\\
  0&0&-1&\dots&-1&1
 \end{array}
\right]
$
\\
& & \\
& $\tilde{f_d}:=f_d(T_1, T_2, T_3 S,\dots, T_{n+2} S)$ &\\
& & \\
\hline
& & \\
$X_{SS}$ 
&
$
 \frac{\cc[T_1,\dots,T_{n+3},S_1,S_2]}
 {\langle T_{n+2}S_1^4-T_{n+3}S_2^4+2\tilde{h}, 
 T_{n+2}T_{n+3}-\tilde{g}
 \rangle}
$
&
$
 \left[
 \begin{array}{rrrrrrrr}
  1&\dots&1&1 &2&2&0&0\\
  -1&\dots&-1& 0 &-4&0&1&0\\
  -1&\dots&-1& 0& 0&-4&0&1\\
 \end{array}
 \right]
$
\\
& & \\
& $\tilde h := h(T_1S_1S_2,\dots,T_nS_1S_2,T_{n+1})$, &\\
& $\tilde{g} := g(T_1,\dots,T_n)$ &\\
& & \\
\hline
& & \\
$X_{2}$ 
&
$
 \frac{\cc[T_1,\dots,T_{n+2},S_1,S_2]}
 {\langle T_{n+2}^2-S_2^2\tilde{f} -T_nT_{n+1}^3\rangle}
$
&
$
\left[
 \begin{array}{rrrrrrrr}
   1&\dots& 1& 1&1& 2&0&0\\
  -1&\dots&-1&-2&0&-1&1&0\\
  -1&\dots&-1&0&0&0&-1&1
 \end{array}
 \right]
$
\\
& & \\
& $\tilde{f} := \frac
{f(T_1S_1S_2^2,\dots,T_{n-1}S_1S_2^2,T_nS_1^2S_2^2,T_{n+1})}
{S_1^2S_2^2}$ & \\
& & \\
\hline
\end{longtable}
}
\end{theorem}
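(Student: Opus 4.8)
The plan is to apply the blow-up algorithm \cite{hkl}*{Algorithm~5.4} recalled above, once for the degree-one variety $X_{1}$ and twice for the degree-two varieties $X_{SS}$ and $X_{2}$ (with the second step an infinitely near blow-up in the case $X_{2}$), starting each time from $Y$ itself. The first task is to record the Cox ring $R_{1}$ of $Y_{1}:=Y$. Since $Y$ has Picard rank one and is a quasi-smooth well-formed hypersurface of a weighted projective space, it is standard (see e.g.\ \cite{ADHL}) that $R_{1}$ is the polynomial Cox ring of the ambient space modulo the defining equation of $Y$: concretely $R_{1}=\cc[x_{1},\dots,x_{n+2}]/(x_{1}^{2}-x_{2}^{3}+x_{2}f_{4}+f_{6})$ with weights $(3,2,1,\dots,1)$ for $X_{1}$, and $R_{1}=\cc[x_{1},\dots,x_{n+2}]/(x_{n+2}^{2}-g-h^{2})$ for $X_{SS}$, respectively $\cc[x_{1},\dots,x_{n+2}]/(x_{n+2}^{2}-x_{n}x_{n+1}^{3}-f)$ for $X_{2}$, both with weights $(1,\dots,1,2)$.

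Next one carries out steps \eqref{step-1}--\eqref{step-3} of the algorithm for each blow-up. The useful observation is that in every case the center $q\in Y_{i}$ is cut out in $\Spec(R_{i})$, up to saturation by the irrelevant ideal, by a subset of the coordinates together with at most one further equation carrying the branch data: $(x_{3},\dots,x_{n+2})$ for $X_{1}$, $(x_{1},\dots,x_{n},\,x_{n+2}\mp h)$ for the two centers of $X_{SS}$, and $(x_{1},\dots,x_{n},x_{n+2})$ for the first center of $X_{2}$. Hence the ideal $I\subseteq R_{i}$ of $\overline{p^{-1}(q)}$ is written down by inspection and the condition \eqref{step-1} holds for these generators with $d_{i}=1$ (a value $d_{i}>1$, reflecting the multiplicity of the hypersurface equation along the center, is needed only in the infinitely near step of $X_{2}$). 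The saturation \eqref{step-2} then amounts to taking the strict transform of the single hypersurface relation of $R_{i}$: after substituting the blown-up coordinates — each coordinate vanishing at $q_{j}$ becomes its strict transform times $S_{j}^{m}$, with $m$ its vanishing multiplicity along $E_{j}$, the others unchanged — and removing the maximal power of the new exceptional variable, one obtains exactly the relations in the table. This accounts for the exponents $S^{4},S^{6}$ for $X_{1}$, for the two relations $T_{n+2}S_{1}^{4}-T_{n+3}S_{2}^{4}+2\tilde h$ and $T_{n+2}T_{n+3}-\tilde g$ for $X_{SS}$ (the second being $(x_{n+2}-h)(x_{n+2}+h)=g$ rewritten after both blow-ups), and for the powers of $S_{1},S_{2}$ for $X_{2}$. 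Finally one verifies the inequality \eqref{step-3}, which guarantees that the algebraic blow-up coincides with the classical one and that the quotient ring is its Cox ring; reading off the $\Pic(X)=\zz\langle H,E_{1},\dots,E_{d}\rangle$-degrees from the geometry (the class of a pulled-back coordinate drops by $m\,E_{j}$ along each $E_{j}$, while $S_{j}$ has class $E_{j}$) produces the grading matrices.

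Two points deserve care. First, the hypothesis that $\MW(\pi)$ be finite is precisely what restricts the degree $\leq 2$ case to the three types $X_{1}$, $X_{SS}$, $X_{2}$: the only remaining degree-two type is $X_{11}$, for which $\MW(\pi)\cong\zz$ and the moving cone is an infinite union of chambers (see the example after the proof of Theorem~\ref{eff-mov-cones}), so it is not a Mori dream space. Second — and this I expect to be the main obstacle — in the case $X_{2}$ the second center is an infinitely near point, lying on the exceptional divisor of the first blow-up and characterised as the fixed point of the lifted covering involution transverse to the ramification divisor; one must identify its ideal in the Cox ring of the first blow-up correctly, keep track of the powers of $S_{1}$ it forces, and then check that the second saturation \eqref{step-2} introduces no generators beyond the single relation listed. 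This verification, and to a lesser degree the analogous saturations for $X_{1}$ and $X_{SS}$, is the part of the argument most likely to require an explicit Gr\"obner-basis computation rather than a purely conceptual one.
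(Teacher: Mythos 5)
Your overall strategy coincides with the paper's: record the Cox ring of $Y$ as (weighted) hypersurface ring, then apply \cite{hkl}*{Algorithm 5.4} once for $X_1$ and twice for $X_{SS}$ and $X_2$, checking \eqref{step-1}--\eqref{step-3} at each step. However, there is a concrete error in how you propose to run the algorithm, namely in the choice of the multiplicities $d_i$. You assert that $d_i=1$ works for all generators except in the infinitely near step of $X_2$. This is backwards. For $X_{SS}$, the branch section $x_{n+2}\mp h$ vanishes to order $4$ along the exceptional divisor: from \eqref{type:ss} one has $(x_{n+2}-h)(x_{n+2}+h)=g$ with $g\in(x_1,\dots,x_n)^4$ and $x_{n+2}+h$ a unit at $q_1$, so one must take this generator in $(I^4\colon J^\infty)$ with $d=4$ (and an analogous degree-$4$ choice is needed again at the second point, where the paper in fact uses $T_1,\dots,T_n,\,2T_{n+1}^2+T_{n+2}S^4\in I$ and $T_{n+2}S^4+2h''\in(I^4\colon J^\infty)$, not just ``coordinates plus one branch equation''). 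Likewise, for the \emph{first} blow-up of $X_2$ the coordinate $x_n$ lies in $(I^2\colon J^\infty)$ (since $x_nx_{n+1}^3=x_{n+2}^2-f$ with $f\in(x_1,\dots,x_n)^2$ and $x_{n+1}(q_1)\neq 0$) and must be taken with $d=2$; by contrast, the paper's second, infinitely near step for $X_2$ uses only generators with $d_i=1$.

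This is not a cosmetic point: the degree of the new variable introduced by the ambient modification is $\deg(f_i)-d_i[E]$, so taking $d_i=1$ where the true multiplicity is $4$ (resp.\ $2$) would assign the classes $2H-E_i$ (resp.\ $H-E_1$) instead of the correct generator classes $2H-4E_i$ and $H-2E_1$, and the resulting ring would not be the presentation in the table (the exponents $S_1^4,S_2^4$ in the $X_{SS}$ relation and the column $(1,-2,0)^t$ in the $X_2$ grading matrix come precisely from these multiplicities); the identification of the output with the Cox ring of the classical blow-up would then fail. Your description of strict transforms (``each coordinate vanishing at $q_j$ becomes its strict transform times $S_j^m$, with $m$ its vanishing multiplicity'') shows the right intuition, but to make the argument work you must feed exactly these multiplicities into \eqref{step-1}, as the paper does, rather than defaulting to $d_i=1$.
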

\begin{proof}
In order to prove the case $X_1$, let $Y_1$ be 
the del Pezzo variety given
by the polynomial~\eqref{type:1} and
let $q\in Y_1$ be the point of coordinates
$(1,1,0,\dots,0)$. The ring $R_1$ equals
$\cc[T_1,\dots,T_{n+2}]/I_1$, where
$I_1$ is the principal ideal generated 
by the polynomial~\eqref{type:1}.
We take $I, J\subseteq R_1$ as before and 
we choose the following homogenous 
elements $f_1,\dots,f_{n}$:
\[
 T_3,\dots,T_{n+2}\in I
\]
as in~\eqref{step-1}, i.e. all of them 
have $d_i = 1$.
Observe that the saturated ideal~\eqref{step-2} is
\[
 I_2
 =
 \langle T_{n+2+i}T_{2n+5} - f_i
 \,\colon\, 1\leq i\leq n\rangle
 +
 I_1
\]
since, after applying the substitution 
$T_{2+i} = T_{n+2+i}T_{2n+5}$ for each $i=1,\dots, n+2$,
the resulting polynomial $T_1^2-T_2^3+
T_2\tilde f_4T_{2n+5}^4+ \tilde f_6T_{2n+5}^6$
is not divisible by $T_{2n+5}$.
Finally, according to~\eqref{step-3}, we need to check that
\[
 \dim I_2+\langle T_{2n+5}\rangle
 >
 \dim I_2+\langle T_{2n+5},T_1T_2\rangle,
\]
and this is easily checked to hold, being $I_1$ 
a principal ideal.
We conclude that the ring $\cc[T_1,\dots,T_{2n+5}]/I_2$
is isomorphic to the the Cox ring of the 
blow-up $X_1$ of $Y$ at $q$. After eliminating
the fake linear relations and renaming the variables,
we get the claimed presentation for the Cox ring.

We now prove the case $X_{SS}$.
Let $Y_1$ be the del Pezzo variety given
by the polynomial~\eqref{type:ss} and
let $q\in Y_1$ be the point of coordinates
$(0,\dots,0,1,1)$. The ring $R_1$ equals
$\cc[T_1,\dots,T_{n+2}]/I_1$, where
$I_1$ is the principal ideal generated 
by the polynomial~\eqref{type:ss}.
We take $I,J\subseteq R_1$ 
as before and choose the following homogenous 
elements $f_1,\dots,f_{n+1}$:
\[
 T_1,\dots,T_{n}\in I,
 \qquad
 T_{n+2}-h\in (I^4\colon J^\infty)
\]
as in~\eqref{step-1}, that is the first $n$ sections
have $d_i = 1$, while $d_{n+1} = 4$.
Observe that the ideal in~\eqref{step-2} is
\[
 I_2
 =
 \langle
 T_{n+2+i}T_{2n+4}^{d_i} - f_{i}
 \,\colon\, 1\leq i\leq n+1
 \rangle
 + 
 \langle
 T_{2n+3}^2T_{2n+4}^4+2T_{2n+3} h' - g'
 \rangle,
\]
where $h' = h(T_{n+3}T_{2n+4},\dots,
T_{2n+2}T_{2n+4},T_{n+1})$ and  $g' = 
g(T_{n+3},\dots,T_{2n+2})$.
According to~\eqref{step-3}, we can easily check that the 
following inequality holds:
\[
 \dim I_2+\langle T_{2n+4}\rangle
 >
 \dim I_2+\langle T_{2n+4},T_{n+1}T_{n+2}\rangle.
\]
Thus, after eliminating the fake 
linear relations from $I_2$ and renaming the variables,
we can conclude that the Cox ring and the grading matrix of the blow-up 
$Y_2$ of $Y_1$ at $q$ are the following
\[
 R_2 
 =
 \frac{\cc[T_1,\dots,T_{n+2},S]}
 {\langle T_{n+2}^2S^4+2 h''T_{n+2}- g''\rangle}
 \qquad
 \left[
  \begin{array}{rcrrrr}
   1&\dots&1&1&2&0\\
   -1&\dots&-1&0&-4&1
  \end{array}
 \right]
\]
where $h'' =  h(T_1S,\dots,
T_n S,T_{n+1})$ and  $g'' = 
g(T_1,\dots,T_n)$.
The irrelevant ideal is $J_2 = \langle T_1,\dots,T_n,T_{n+2}\rangle
\cap\langle T_{n+1},S\rangle$.
We now repeat the 
procedure blowing-up $Y_2$ at the point $q_2'$ which lies
over $q_2 = (0,\dots,0,1,-1)\in Y$. Recall that there is a
$\cc^*$-equivariant embedding of total coordinate spaces
\[
 \overline Y_1\to\overline{Y_2}
 \qquad
 (T_1,\dots, T_{n+2})\to (T_1,\dots,T_{n+1},T_{n+2} - h,1)
\]
which induces the birational map $Y_1\dashrightarrow Y_2$.
The image of $q_2$ is the point of homogeneous
coordinates $q_2' = (0,\dots,0,1,-2,1)$.
We choose the following homogenous 
elements $f_1,\dots,f_{n+2}$:
\[
 T_1,\dots,T_{n},2T_{n+1}^2+T_{n+2}S^4\in I,
 \qquad
 T_{n+2}S^4+2h'' \in (I^4\colon J^\infty)
 \]
as in~\eqref{step-1}, that is the first $n+1$ sections
have $d_i = 1$, while $d_{n+2} = 4$.
The ideal in~\eqref{step-2} is
\[
 I_3
 =
 \langle
 T_{n+3+i}T_{2n+6}^{d_i} - f_{i}
 \,\colon\, 1\leq i\leq n+2
 \rangle
 +
 \langle
 T_{n+2}^2S^4+2T_{n+2} h''' - T_{2n+6}^4 g'''
 \rangle
\]
where $h''' =  h(T_{n+4}T_{2n+6}S,\dots,
T_{2n+3}T_{2n+6} S,T_{n+1})$ and  $g''' = 
g(T_{n+4},\dots,T_{2n+3})$.
After eliminating the fake linear relations from
the above ideal and renaming 
the variables, we get 
the statement for the case $X_{SS}$.\\

Finally, let us prove the case $X_{2}$.
Let $Y_1$ be the del Pezzo variety given
by the polynomial~\eqref{type:2} and
let $q\in Y_1$ be the point of coordinates
$(0,\dots,0,1,0)$. The ring $R_1$ equals
$\cc[T_1,\dots,T_{n+2}]/I_1$, where
$I_1$ is the principal ideal generated 
by the polynomial~\eqref{type:2}.
We take $I,J\subseteq R_1$
as before and choose the following homogenous 
elements $f_1,\dots,f_{n+1}$:
\[
 T_1,\dots,T_{n-1},T_{n+2}\in I,
 \qquad
 T_{n}\in (I^2\colon J^\infty)
\]
as in~\eqref{step-1}, that is the first $n$ sections
have $d_i = 1$, while $d_{n+1} = 2$.
Observe that the ideal in~\eqref{step-2} is
\[
 I_2
 =
 \langle
 T_{n+2+i}T_{2n+4}^{d_i} - f_{i}
 \,\colon\, 1\leq i\leq n+1
 \rangle
 + 
 \langle
 T_{2n+2}^2-f' -T_{2n+3}T_{n+1}^3
 \rangle
\]
with $f' = T_{2n+4}^{-2}f(T_{n+3}T_{2n+4},\dots,T_{2n+1}T_{2n+4},
T_{n}T_{2n+4}^2,T_{n+1})$.
According to~\eqref{step-3} it can be easily checked that 
\[
 \dim I_2+\langle T_{2n+4}\rangle
 >
 \dim I_2+\langle T_{2n+4},T_{n+1}\rangle.
\]
Thus, after eliminating the fake 
linear relations from $I_2$ and renaming the variables,
we conclude that the Cox ring and the grading matrix of the blow-up 
$Y_2$ of $Y_1$ at $q$ are the following
\[
 R_2 
 =
 \frac{\cc[T_1,\dots,T_{n+3}]}
 {\langle T_{n+2}^2-f'' -T_{n}T_{n+1}^3\rangle}
 \qquad
 \left[
  \begin{array}{rcrrrrr}
   1&\dots&1&1&1&2&0\\
   -1&\dots&-1&-2&0&-1&1
  \end{array}
 \right]
\]
with $f'' = T_{n+3}^{-2}f(T_1T_{n+3},\dots,T_{n-1}T_{n+3},
T_{n}T_{n+3}^2,T_{n+1})$.
The irrelevant ideal of $R_2$ is $J_2 = \langle T_1,\dots,T_{n-1},T_{n+2}\rangle
\cap\langle T_{n},S\rangle$.
Now repeat the 
procedure by blowing up $Y_2$ at the point 
$q_2' = (0,\dots,0,1,1,1,0)$ which is 
the invariant point with respect
to the lifted involution 
$(T_1,\dots,T_{n+3})\mapsto (T_1,\dots,T_{n+1},-T_{n+2},T_{n+3})$,
and it corresponds to the generator
of the kernel of the differential 
$d\varphi_q$.
We choose the following homogenous 
elements $f_1,\dots,f_{n}$:
\[
 T_1,\dots,T_{n-1},T_{n+3}\in I,
 \]
as in~\eqref{step-1}, i.e. $d_i = 1$
for all the sections.
The ideal in~\eqref{step-2} is
\[
 I_3
 =
 \langle
 T_{n+3+i}T_{2n+3}^{d_i} - f_{i}
 \,\colon\, 1\leq i\leq n
 \rangle
 +
 \langle
 T_{n+2}^2-T_{2n+3}^2\tilde f -T_{n}T_{n+1}^3
 \rangle
\]
where $\tilde f =  T_{2n+3}^{-2}
f''(T_{n+4}T_{2n+3}S,\dots,T_{2n+3}T_{2n+3} S,T_{n+1})$.
After eliminating the fake linear relations from
the above ideal and renaming 
the variables, we obtain 
the statement for the case $X_{2}$.

\end{proof}

\subsection{Degree four}
In this last subsection, we first provide the 
following presentation
for the Cox rings of the blowing-up of a del
Pezzo variety of degree four at a point.
\begin{lemma}
\label{cox:pt}
Let $Y$ be a smooth complete intersection 
of two quadrics of $\pp^{n+2}$. After possibly
applying a linear change of coordinates, the ideal
of $Y$ is generated by 
$x_2x_3-x_1x_2+f(x_4,\dots,x_{n+3})$ and
$x_2x_3-x_1x_3+g(x_4,\dots,x_{n+3})$.
The blow-up $\widetilde Y$ of $Y$ at the point 
$q=(1,0,\dots,0)\in Y$
has the following Cox ring and grading matrix
{\small
\[
\frac{\cc[T_1,\dots,T_{n+3},S]}
 {\langle T_2T_3S^2-T_1T_2+f,
T_2T_3S^2-T_1T_3+g
\rangle}
\quad\quad
\left[
 \begin{array}{rrrrrrr}
   1 & 1  & 1 & 1& \dots& 1 & 0\\
   0 & -2 &-2 &-1& \dots &-1 & 1
 \end{array}
 \right]
\]
}
respectively, where $f=f(T_4,\dots,T_{n+3})$ and
$g=g(T_4,\dots,T_{n+3})$.
\end{lemma}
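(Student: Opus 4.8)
The plan is to run the blow-up algorithm \cite{hkl}*{Algorithm~5.4} with $Y_1 = Y$ and center $q = (1,0,\dots,0)\in Y$, exactly as in the proof of Theorem~\ref{cox:12}. For the normal form, pick two of the $n+3$ singular quadrics of the pencil through $Y$, say $Q_1,Q_2$; since their vertices lie off $Y$, the point $q\in Y$ is a smooth point of both, and the tangent hyperplanes $T_qQ_1$, $T_qQ_2$ must be distinct (otherwise every member of the pencil would be tangent to the same hyperplane at $q$ and $T_qY$ would have dimension $n+1$). Hence after a linear change of coordinates we may take $q = (1:0:\dots:0)$, $T_qQ_1 = \{x_2 = 0\}$, $T_qQ_2 = \{x_3 = 0\}$; a further linear normalization of the quadratic parts of $Q_1,Q_2$ puts the ideal in the stated form, and smoothness of $Y$ forces $f$ and $g$ to be nondegenerate quadratic forms in $x_4,\dots,x_{n+3}$ that are not proportional. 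Since $Y$ is projectively normal (Section~\ref{sec:setup}), its Cox ring is $R_1 = \cc[T_1,\dots,T_{n+3}]/I_1$ with $I_1 = \langle T_2T_3 - T_1T_2 + f,\ T_2T_3 - T_1T_3 + g\rangle$ and $K_1 = \Cl(Y) = \zz$.

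The characteristic space is $\widehat Y_1 = \overline Y_1\setminus\{0\}$, the irrelevant ideal is $J = \langle T_1,\dots,T_{n+3}\rangle$, and the closure of $p^{-1}(q)$ in $\overline Y_1$ is the line $\{(t,0,\dots,0)\}$, so $I = \langle T_2,\dots,T_{n+3}\rangle\subseteq R_1$. As generators of $I$ for~\eqref{step-1} I would take $T_4,\dots,T_{n+3}$ with $d_i = 1$ and $T_2,T_3$ with $d_i = 2$. The first substantive point is the membership $T_2,T_3\in (I^2\colon J^\infty)$: from the two defining relations one reads $T_1T_2 = T_2T_3 + f$ and $T_1T_3 = T_2T_3 + g$, and since $T_2T_3\in I^2$ and $f,g\in I^2$ this gives $J\cdot T_2\subseteq I^2$ and $J\cdot T_3\subseteq I^2$; conversely $T_2,T_3\notin(I^3\colon J^\infty)$ because in $\Osh_{Y,q}$ one has $\mathfrak m_q = (T_4,\dots,T_{n+3})$ with $T_2,T_3\in\mathfrak m_q^2\setminus\mathfrak m_q^3$, so the $d_i$ cannot be taken larger and the algorithm produces the classical blow-up.

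Eliminating the fake linear relations amounts to substituting $x_j = T_jS$ for $j\geq 4$, $x_2 = T_2S^2$, $x_3 = T_3S^2$, $x_1 = T_1$ and $S = T_{r_2}$ into the generators of $I_1$. Using that $f,g$ are homogeneous of degree two, the first generator becomes
\[
 T_2S^2\cdot T_3S^2 - T_1\cdot T_2S^2 + S^2 f(T_4,\dots,T_{n+3})
 = S^2\bigl(T_2T_3S^2 - T_1T_2 + f\bigr),
\]
and the second becomes $S^2(T_2T_3S^2 - T_1T_3 + g)$, so saturating~\eqref{step-2} with respect to $S = T_{r_2}$ yields precisely the ideal in the statement, with grading $\deg T_1 = H$, $\deg T_2 = \deg T_3 = H - 2E$, $\deg T_j = H - E$ for $4\leq j\leq n+3$, $\deg S = E$. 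Finally, to verify~\eqref{step-3}, note that $T_\nu = T_1$ is the only one of $T_1,\dots,T_{n+3}$ not vanishing on $\overline{p^{-1}(q)}$; reducing $I_2$ modulo $S$ gives $\langle T_1T_2 - f,\ T_1T_3 - g\rangle$, whose zero locus has dimension $n+1$, while reducing modulo $\langle S,T_1\rangle$ gives $\langle f,g\rangle$ in the variables $T_4,\dots,T_{n+3}$ with $T_2,T_3$ free, of dimension $n$ since $V(f)\cap V(g)$ has codimension two — this is where non-proportionality of $f,g$ is used. As $n+1 > n$, the algorithm certifies that $\cc[T_1,\dots,T_{n+3},S]/I_2$ with the displayed grading is the Cox ring of $\widetilde Y$.

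The main obstacle, modest as it is, lies in correctly identifying the fibre ideal together with the multiplicities $d_2 = d_3 = 2$, i.e.\ the membership $T_2,T_3\in(I^2\colon J^\infty)\setminus(I^3\colon J^\infty)$ — equivalently, the fact that the hyperplane sections $\{x_2 = 0\}$ and $\{x_3 = 0\}$ pass through $q$ with multiplicity exactly two — after which the saturation extracting the factor $S^2$ from each generator and the dimension check~\eqref{step-3} are routine.
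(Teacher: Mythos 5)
Your overall route is the same as the paper's: you run \cite{hkl}*{Algorithm 5.4} with the same choice of fibre generators ($T_4,\dots,T_{n+3}$ with $d_i=1$ and $T_2,T_3$ with $d_i=2$), and your verification of the membership $T_2,T_3\in(I^2\colon J^\infty)$, of the saturation extracting $S^2$ from each relation, of the grading, and of the dimension test~\eqref{step-3} (with $T_\nu=T_1$, dimensions $n+1>n$) is correct and in fact more explicit than the paper, which leaves those checks to the reader.

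The genuine gap is in the normal form. Choosing two singular members $Q_1,Q_2$ of the pencil and noting that $q$ is smooth on both with distinct tangent hyperplanes only gives, after setting $q=(1,0,\dots,0)$, $T_qQ_1=V(x_2)$, $T_qQ_2=V(x_3)$, equations of the shape $-x_1x_2+A(x_2,\dots,x_{n+3})$ and $-x_1x_3+B(x_2,\dots,x_{n+3})$ with \emph{arbitrary} quadratic forms $A,B$; the stated normal form requires in addition that $A$ and $B$ contain no $x_2^2$, $x_3^2$, $x_2x_j$, $x_3x_j$ ($j\geq 4$) terms and share the monomial $x_2x_3$. These are roughly $4n+4$ conditions, and the residual coordinate freedom fixing $q$, $V(x_2)$, $V(x_3)$ cannot produce them: your phrase ``a further linear normalization of the quadratic parts'' is exactly the step that is missing, and it cannot be carried out without using that $Q_1,Q_2$ are cones. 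This is precisely where the paper's proof differs: it normalizes the \emph{vertices} of the two cones (to $(1,0,1,0,\dots,0)$ and $(1,1,0,\dots,0)$), and it is the vanishing of the gradients at those points that forces the coefficient of $x_2x_3$ to be $1$ and kills the $x_3^2$, $x_3x_j$ (resp.\ $x_2^2$, $x_2x_j$) terms in the first (resp.\ second) generator. You never invoke the vertices after choosing the quadrics singular, so the first claim of the lemma is not actually proved in your write-up. (To be fair, even with the vertices placed one still has to check that a suitable pair of cones can be chosen and that the leftover terms of the form $x_2\cdot\ell(x_{\geq 4})$, $x_3\cdot\ell'(x_{\geq 4})$ can be removed simultaneously; the paper is also terse on this point, but it at least records the vertex normalization, which is the key input your argument omits.) The algorithmic part of your proof stands once the normal form is granted.
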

\begin{proof}
After applying a linear change of coordinates, we
can assume that $q$ is a point of $Y= Q\cap Q'$, where $Q$ is singular at 
$(1,1,0,0,\dots,0)$ and $Q'$ is singular at 
$(1,0,1,0,\dots,0)$, and that the tangent hyperplanes
to $Q$ and $Q'$ at $q$ are $V(x_2)$ and $V(x_3)$,
respectively. This proves the first claim.

To prove the second statement, we take 
$R_1$ to be $\cc[T_1,\dots,T_{n+3}]/I_1$, where 
$I_1$ is the ideal of $Y$, and we apply~\cite{hkl}*{Algorithm 5.4}.
We take $I,J\subseteq R_1$
as before and choose the following homogenous 
elements $f_1,\dots,f_{n+2}$:
\[
 T_4,\dots,T_{n+3}\in I,
 \qquad
 T_{2}, T_3\in (I^2\colon J^\infty)
\]
as in~\eqref{step-1}, that is the first $n$ sections
have $d_i = 1$, while $d_{n+1} = d_{n+2} = 2$.
The ideal in~\eqref{step-2} is
\[
\begin{array}{rcl}
 I_2 &
 = 
 &
 \langle
 T_{n+3+i}T_{2n+6}^{d_i} - f_{i}
 \,\colon\, 1\leq i\leq n+2
 \rangle +\\
&& + 
 \langle T_{2n+4}T_{2n+5}T_{2n+6}^2-T_1T_{2n+4}+\tilde{f},
T_{2n+4}T_{2n+5}T_{2n+6}^2-T_1T_{2n+5}+\tilde{g}\rangle,
\end{array}
\]
where $\tilde{f} := f(T_{n+4},\dots,T_{2n+3})$ and
$\tilde{g} := g(T_{n+4},\dots,T_{2n+3})$.
According to~\eqref{step-3} it can be easily checked that 
\[
 \dim I_2+\langle T_{2n+6}\rangle
 >
 \dim I_2+\langle T_{2n+6},T_{1}\rangle.
\]
After eliminating the fake linear relations from $I_2$ and 
renaming the variables, we get the second statement.

\end{proof}

We conclude with two examples of the computation of the
Cox rings for the del Pezzo elliptic varieties of degree four and
dimension three. 
We only report the final results, since the computations have
been done with the same procedure as before.

\vspace{5mm}

{\tiny
\begin{tabular}{l}
{\bf Case $X_{43}$}:\\
\\
\hline
\\
Equations:\\
\\
$
\begin{array}{l}
    x_{2}^2 - x_{3}^2 + x_{4}^2 + x_{5}^2 + x_{6}^2,\\
    x_{1}^2 - x_{3}^2 + 2x_{4}^2 + 3x_{5}^2 + 4x_{6}^2
\end{array}
$
\\
\\
\hline
\\
Cox ring $\cc[T_1,\dots,T_{13}]/I$, where $I$ is generated by:\\
\\
$
\begin{array}{l}
    T_{1}^2 - T_{3}^2 + 2T_{5}T_{8} - T_{6}T_{7},\\
    T_{2}^2 + 2T_{3}^2 - T_{5}T_{8} + T_{6}T_{7},\\
    T_{4}T_{9} - T_{5}T_{8} - T_{6}T_{7},\\
    T_{4}T_{10}^2 - T_{7}T_{12}^2 + T_{8}T_{13}^2,\\
    T_{4}T_{11}^2 - T_{5}T_{12}^2 - T_{6}T_{13}^2,\\
    T_{5}T_{8}T_{11}^2T_{12}^2T_{13}^2 - T_{5}T_{9}T_{12}^4T_{13}^2 + 
        T_{6}T_{7}T_{11}^2T_{12}^2T_{13}^2 - T_{6}T_{9}T_{12}^2T_{13}^4,\\
    T_{5}T_{10}^2 - T_{7}T_{11}^2 + T_{9}T_{13}^2,\\
    T_{6}T_{10}^2 + T_{8}T_{11}^2 - T_{9}T_{12}^2
\end{array}
$
\\
\\
\hline
\\
Degree matrix:\\
\\
$
\left[
\begin{array}{rrrrrrrrrrrrr}
 1 & 1 &  1 & 1 &  1 & 1 &  1 & 1 &  1 & 0 &  0 & 0 &  0\\
-1 & -1 & -1 & -2 & -2 & -2 &  0 & 0 &  0 & 1 &  0 & 0 &  0\\
-1 & -1 & -1 & -2 &  0 & 0 & -2 & -2 &  0 & 0 &  1 & 0 &  0\\
-1 & -1 & -1 & 0 & -2 & 0 & -2 & 0 & -2 & 0 &  0 & 1 &  0\\
-1 & -1 & -1 & 0 &  0 & -2 &  0 & -2 & -2 & 0 &  0 & 0 &  1\\
\end{array}
\right]
$
\\
\\
\hline
\\
\\
\end{tabular}
}

{\tiny 
\begin{tabular}{l}
{\bf Case $X_{22}$}:\\
\\
\hline
\\
Equations:\\
\\
$
\begin{array}{l}
     x_{2}^2 - x_{3}^2 + x_{4}^2 + x_{5}^2 + x_{6}^2,\\
    x_{1}^2 - x_{3}^2 + \frac34x_{4}^2 + 2x_{5}^2 + 3x_{6}^2
\end{array}
$
\\
\\
\hline
\\
Cox ring $\cc[T_1,\dots,T_{10}]/I$, where $I$ is generated by:\\
\\
$
\begin{array}{l}
    2T_{1}^2 - T_{3}^2T_{8}^2T_{10}^2 + 11T_{4}^2T_{7}^2T_{8}^2T_{9}^2T_{10}^2 - 
        8T_{4}T_{5}T_{7}^2T_{8}^4 + 8T_{4}T_{6}T_{9}^2T_{10}^4 - 4T_{5}T_{6}T_{8}^2T_{10}^2,\\
    2T_{2}^2 + 3T_{3}^2 + 3T_{4}^2T_{7}^2T_{9}^2 + 4T_{5}T_{6}
\end{array}
$
\\
\\
\hline
\\
Degree matrix:\\
\\
$
\left[
\begin{array}{rrrrrrrrrr}
 1 & 1 &  1 & 1 &  1 & 1 &  0 & 0 &  0 & 0 \\
-1 & -1 & -1 & -2 & -2 & 0 &  1 & 0 &  0 & 0 \\
-1 & -1 & -1 & -2 &  0 & -2 &  0 & 0 &  1 & 0 \\
 0 & -1 & -1 & 0 & -2 & 0 & -1 & 1 &  0 & 0 \\
 0 & -1 & -1 & 0 &  0 & -2 &  0 & 0 & -1 & 1 \\
\end{array}
\right]
$
\\
\\
\hline
\\
\\
\end{tabular}
}

\begin{bibdiv}
\begin{biblist}

\bib{ADHL}{book}{
   author={Arzhantsev, Ivan},
   author={Derenthal, Ulrich},
   author={Hausen, J{\"u}rgen},
   author={Laface, Antonio},
   title={Cox rings},
   series={Cambridge Studies in Advanced Mathematics},
   volume={144},
   publisher={Cambridge University Press, Cambridge},
   date={2015},
   pages={viii+530},
   isbn={978-1-107-02462-5},
   review={\MR{3307753}},
}

\bib{dp}{book}{
    AUTHOR = {Iskovskikh, V. A. and Prokhorov, Yu. G.},
     TITLE = {Fano varieties},
 BOOKTITLE = {Algebraic geometry, {V}},
    SERIES = {Encyclopaedia Math. Sci.},
    VOLUME = {47},
     PAGES = {1--247},
 PUBLISHER = {Springer, Berlin},
      YEAR = {1999},
}

\bib{cps}{article}{
      author={Coskun, Izzet},
      author={Prendergast-Smith, Artie},
   title={Fano manifolds of index $n-1$ and the cone conjecture},
   journal={Int. Math. Res. Not. IMRN},
   date={2014},
   number={9},
   pages={2401--2439},
}

\bib{hkl}{article}{
    AUTHOR = {Hausen,  J\"urgen},    
     AUTHOR = {Keicher, Simon},
    AUTHOR = {Laface, Antonio},
  TITLE = {Computing Cox rings},
   year = {2014},
    EPRINT = {arXiv:1305.4343, 2013},
}

\bib{hltu}{article}{
    AUTHOR = {Hausen,  J\"urgen},    
    AUTHOR = {Laface, Antonio},
     AUTHOR = {Tironi, Andrea L.},
    AUTHOR = {Ugaglia, Luca},
TITLE = {On Cubic Elliptic Varieties}
DOI = {10.1090/tran/6353},
EPRINT = {http://dx.doi.org/10.1090/tran/6353},
}

\bib{HuKe}{article}{
    AUTHOR = {Hu, Yi},
    AUTHOR = {Keel, Sean},
     TITLE = {Mori dream spaces and {GIT}},
      NOTE = {Dedicated to William Fulton on the occasion of his 60th
              birthday},
   JOURNAL = {Michigan Math. J.},
  FJOURNAL = {Michigan Mathematical Journal},
    VOLUME = {48},
      YEAR = {2000},
     PAGES = {331--348},
      ISSN = {0026-2285},
   MRCLASS = {14L24 (14E30)},
  MRNUMBER = {1786494 (2001i:14059)},
}

\bib{Wa}{article}{
   author={Wazir, Rania},
   title={Arithmetic on elliptic threefolds},
   journal={Compos. Math.},
   volume={140},
   date={2004},
   number={3},
   pages={567--580},
}
		
\end{biblist}
\end{bibdiv}

\end{document}